\documentclass[12pt,twoside]{amsart}
\usepackage{amsfonts}
\usepackage{amsmath}
\usepackage{amssymb}
\usepackage{graphicx}
\usepackage{mathrsfs}
\usepackage{tikz}
\usepackage{lscape}


\theoremstyle{plain}
\newtheorem{thm}{Theorem}[section]
\newtheorem{cor}[thm]{Corollary}

\newtheorem{lem}[thm]{Lemma}
\newtheorem{prop}[thm]{Proposition}

\newtheorem{defn}{Definition}[section]

\newtheorem{rem}{Remark}

\usepackage{afterpage}

\usepackage[english]{babel}

\usepackage{fancyhdr}


\pagenumbering{arabic}

\numberwithin{equation}{section}


\def\1{{\mathchoice {\rm 1\mskip-4mu l} {\rm 1\mskip-4mu l}{\rm 1\mskip-4.5mu l} {\rm 1\mskip-5mu l}}}
\newcommand{\ds}{\displaystyle}

\usepackage[width=16.00cm, height=21.00cm, left=2.00cm, right=2.00cm, top=4cm, bottom=2cm]{geometry}

\title{Modified Bergman spaces on the unit ball of $\mathbb C^n$ and applications}

\author[H. Ben Amor and N. Ghiloufi]{Hajer Ben Amor and Noureddine Ghiloufi}
\email{ben.amor.hajer12@gmail.com, noureddine.ghiloufi@fsg.rnu.tn}
\address{University of Gabes\\ Faculty of Sciences of Gabes\\ LR17ES11 Mathematics and Applications laboratory\\ 6072, Gabes, Tunisia.}

\subjclass[2020]{47G10, 32A25, 32A36, 32A37}
\keywords{modified Bergman spaces, Reproducing Kernels, Berezin transform, BMO.}
\begin{document}

	\begin{abstract}
		In this paper, we introduce new spaces of holomorphic functions on the unit ball $\mathbb{B}_{n}$ of $\mathbb{C}^{n}$ generalizing the classical Bergman spaces. The main results include the properties of some operators and integrals representations such as Bergman-type projections, and Berezin transform.	\end{abstract}
	\maketitle
	\section{Introduction and Preliminary results}
	The early 1970s marked the beginning of Bergman space notion, which found more and more applications in mathematics, especially in complex analysis of both one and several complex variables, and geometry. A fundamental concept is the reproducing kernel for Hilbert spaces, which is named after Stefan Bergman. Moreover, the above notation plays an essential role in the topological properties of Bergman spaces.\\ In recent years, the classical Bergman spaces have been extended to what we call modified Bergman spaces, so in this paper, we will study some properties of these spaces for the unit ball of $\mathbb{C}^{n}$ by introducing a new set of holomorphic functions related to a specific probability measure $d\mu_{a,b}$, for $a>-1$ and $b>-n$, such that if we take $b=0$, we re-obtain the classical case developed in  \cite{Zh}. \\

	This paper is divided into five parts. In the first, we introduce some notations and results that we will use in the rest of the paper. Then, we present some preliminary results on the reproducing kernel $\mathbb{K}_{a,b}$ of the modified Bergman space $\mathcal{A}^{2}_{a,b}(\mathbb B_n)$, and the boundedness of the projection from $L^{p}(\mathbb B_n,d\mu_{a,b})$ onto $\mathcal{A}^{p}_{a,b}(\mathbb B_n)$ for $1\leq p < + \infty$. In the 3rd section, we provide some properties for the Berezin transform $\mathbf{B}_{a,b}$, which was originally introduced by Berezin to solve certain physical problems. In the 4th section, we review the Bergman-Poincar\'e metric and introduce the concept of BMO (bounded mean oscillation) functions. In the last section, we investigate the location of the zeros of the hypergeometric function $Q_{a,b}$ by considering some special cases of the parameters $a$ and $b$.  \\
	
	Throughout the paper, for every  $z=(z_1,z_2,\dots,z_n),\ w=(w_1,w_2,\dots,w_n)\in\mathbb C^n$ we set $\langle z,w\rangle=\sum_{j=1}^nz_j\overline{w}_j$ the Euclidean inner product on $\mathbb C^n$ and $|z|=\sqrt{\langle z,z\rangle}$ the Euclidean norm. We denote by $B(z_0,r_0)$  the ball of $\mathbb C^n$ with center $z_0$ and radius $r_0>0$. i.e. $B(z_0,r_0):=\{z\in\mathbb C^n,\ |z-z_0|<r_0\}.$ For the unit ball, we use $\mathbb B_n$ instead of $B(0,1)$. If $z=(z_1,z_2,\dots,z_n)\in\mathbb C^n$ and $\alpha=(\alpha_1,\dots,\alpha_n)\in \mathbb{N}^{n}$ is a multi-indexes of non-negative integers ($\alpha_j\geq0$), we set $|\alpha|=\alpha_1+\dots+\alpha_n,\ \alpha!=\alpha_1!\dots\alpha_n!$ and $z^\alpha=z_1^{\alpha_1}\dots z_n^{\alpha_n}$.\\
	
	Many results will be described in terms of the  hypergeometric functions $\ _qF_s$ defined by \begin{equation*}
		\ _qF_s \left(\begin{matrix} c_1,\dots,c_q\\
			d_1,\dots,d_s \end{matrix} \middle | z \right) = \sum_{n=0}^{+\infty} \frac{(c_1)_{n}\dots (c_q)_{n}}{(d_1)_{n}\dots(d_s)_{n}} \frac{z^{n}}{n!}
	\end{equation*}
	where $(c)_{n}$ is the Pochhammer symbol given by $(c)_n=1$ if $n=0$ and $(c)_n=c(c+1)\dots(c+n-1)$ for every $n\geq 1$. We claim that for every $c\in\mathbb C\smallsetminus\mathbb Z^-$,  $(c)_{n}=\frac{\Gamma(c+n)}{\Gamma(c)}$.\\
	
	For every $a>-1$ and $b>-n$, we consider the positive measure $	d\mu_{a,b}(z)$ on $\mathbb{B}_{n} $ defined by: \begin{equation*}
		d\mu_{a,b}(z)= \frac{(n-1)!}{\pi^{n}\beta(a+1,b+n)} \lvert z | ^{2b} (1-\lvert z |^{2})^{a} d\lambda_{n}(z),
	\end{equation*}
	where $\lambda_n$ is the Lebesgue measure on $\mathbb C^n$ and $\beta$ is the beta-function defined by \begin{equation*}
		\beta(s,t) = \int_{0}^{1} x^{s-1}(1-x)^{t-1} dx=\frac{\Gamma(s)\Gamma(t)}{\Gamma(s+t)} \ \ \forall s,t>0.
	\end{equation*}
	The constants above are chosen so that  $d\mu_{a,b}(z)$ becomes a probability measure on $\mathbb{B}_{n} $.\\
	For every  $0<p<+\infty$, we denote by $\mathcal{A}^{p}_{a,b}(\mathbb{B}_{n})$ the Bergman-type space defined as the set of holomorphic functions on $\mathbb{B}_{n}$ that belong to the space
	\begin{equation*}
		L^{p}(\mathbb{B}_{n}, d\mu_{a,b})=\{f: \mathbb{B}_{n} \to \mathbb{C} \text{ measurable function  such that }\| f \|_{a,b,p}< + \infty\},
	\end{equation*}
	where \begin{equation*}
		\| f \|^{p}_{a,b,p}:= \int_{\mathbb{B}_{n}} |f(z)|^{p} d\mu_{a,b}(z).
	\end{equation*}
	If there is no ambiguity, we denote $\| f \|_p$ instead of $\| f \|_{a,b,p}$.
	\begin{rem}
		Suppose $\alpha=(\alpha_1,\dots,\alpha_n)\in \mathbb{N}^{n}$ is a multi-index of nonnegative integers. Then \begin{equation}\label{eq1}
				\int_{\mathbb{S}^{n}} \lvert \xi ^{\alpha} \rvert^{2} d\sigma_{n}(\xi)= \frac{2 \pi^{n} \alpha!}{(\lvert \alpha \rvert +n-1)!}
		\end{equation}
	\end{rem}
	\begin{prop}
		For any $0<r_{0}<1$, there exists $c(r_{0})=c_{a,b,n}(r_{0})$ such that for any $0<p<+\infty$ and $f \in \mathcal{A}^{p}_{a,b}(\mathbb{B}_{n})$, we have
		\begin{equation*}
			\sup_{z\in \overline{B}(0,r_0)}| f(z)|^{p} \leq \frac{n\beta(a+1,b+n)}{c(r_{0})} \| f \|_{p}^p.
		\end{equation*}
		It is possible to choose
		\begin{equation*}
			c(r_{0})= r_{1}^{2n} \mathfrak a(r_{0}) \mathfrak b(r_{0}) \quad \text{with} \quad r_{1}=\frac{1}{2}\min(r_{0},1-r_{0}),
		\end{equation*}
		$$\mathfrak a(r_{0}) := \begin{cases} \big(1-(r_{0}+r_{1})^{2}\big)^{a} & \text{if}\quad a \geq 0 \\
			\big(1-(r_{0}-r_{1})^{2}\big)^{a} & \text{if}\quad -1<a<0. \end{cases}$$
		and
		$$ \mathfrak b(r_{0}) := \begin{cases} (r_{0}-r_{1})^{2b} & \text{if}\quad b \geq 0, \\
			(r_{0}+r_{1})^{2b} & \text{if}\quad -n<b<0. \end{cases}$$
	\end{prop}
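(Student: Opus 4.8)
The plan is to reduce the estimate, via the maximum principle, to a bound on $|f|^p$ at a single point of the sphere $\{|z|=r_0\}$, and then to apply the sub-mean value inequality over a small Euclidean ball on which the density of $d\mu_{a,b}$ is bounded below.

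\emph{Step 1 (reduction to the sphere).} Since $f$ is holomorphic, $|f|^p$ is plurisubharmonic on $\mathbb B_n$ for every $p>0$, hence subharmonic; by the maximum modulus principle applied to $f$ on $\overline B(0,r_0)$ we get $\sup_{z\in\overline B(0,r_0)}|f(z)|^p=\max_{|z|=r_0}|f(z)|^p$, and this maximum is attained at some $z^*$ with $|z^*|=r_0$. It therefore suffices to bound $|f(z^*)|^p$.

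\emph{Step 2 (the ball and the weight).} Put $r_1=\tfrac12\min(r_0,1-r_0)$ and consider $B(z^*,r_1)$. Because $|z^*|=r_0$, the triangle inequality gives $r_0-r_1\le|z|\le r_0+r_1<1$ for every $z\in B(z^*,r_1)$; in particular $B(z^*,r_1)\subset\mathbb B_n$, and $B(z^*,r_1)$ lies in the closed annulus $\mathcal A_{r_0}=\{\,r_0-r_1\le|z|\le r_0+r_1\,\}$. On $\mathcal A_{r_0}$ the factor $|z|^{2b}$ is monotone in $|z|$ (increasing if $b\ge0$, decreasing if $-n<b<0$) and $(1-|z|^2)^a$ is monotone in $|z|$ (decreasing if $a\ge0$, increasing if $-1<a<0$); evaluating each factor at the appropriate endpoint radius yields, in the four cases of the statement, $|z|^{2b}(1-|z|^2)^a\ge\mathfrak a(r_0)\,\mathfrak b(r_0)$ throughout $\mathcal A_{r_0}$.

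\emph{Step 3 (assembling the constants).} The sub-mean value property of $|f|^p$ over $B(z^*,r_1)$ gives $|f(z^*)|^p\le\lambda_n(B(z^*,r_1))^{-1}\int_{B(z^*,r_1)}|f|^p\,d\lambda_n$, and $\lambda_n(B(z^*,r_1))=\frac{\pi^n}{n!}r_1^{2n}$. Writing $1=\dfrac{|z|^{2b}(1-|z|^2)^a}{|z|^{2b}(1-|z|^2)^a}$ inside the integral, bounding the reciprocal by $1/(\mathfrak a(r_0)\mathfrak b(r_0))$ via Step 2, enlarging the domain to $\mathbb B_n$, and using $|z|^{2b}(1-|z|^2)^a\,d\lambda_n(z)=\frac{\pi^n\beta(a+1,b+n)}{(n-1)!}\,d\mu_{a,b}(z)$, the integral turns into $\frac{\pi^n\beta(a+1,b+n)}{(n-1)!}\|f\|_p^p$. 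Collecting the constants gives $|f(z^*)|^p\le\frac{n\,\beta(a+1,b+n)}{r_1^{2n}\mathfrak a(r_0)\mathfrak b(r_0)}\|f\|_p^p$, which is the assertion with $c(r_0)=r_1^{2n}\mathfrak a(r_0)\mathfrak b(r_0)$.

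The one point that needs care — and the reason the ball is centred on $\{|z|=r_0\}$ rather than at an arbitrary $z_0\in\overline B(0,r_0)$ — is that a ball centred near the origin would contain points where $|z|^{2b}$ degenerates (when $b>0$) or where $(1-|z|^2)^a$ is uncontrollably small (when $-1<a<0$); the maximum-principle reduction in Step 1 is exactly what removes this difficulty, after which the argument is only a bookkeeping of constants.
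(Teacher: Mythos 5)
Your proof is correct and follows essentially the same route as the paper's: reduce to a point on the sphere $\{|z|=r_0\}$ by the maximum principle, apply the sub-mean value inequality of $|f|^p$ on $B(z^*,r_1)$, and bound the weight $|w|^{2b}(1-|w|^2)^a$ from below on the resulting annulus. As a side remark, your case analysis for $\mathfrak a(r_0)$ (lower bound $\big(1-(r_0+r_1)^2\big)^a$ when $a\ge 0$, and $\big(1-(r_0-r_1)^2\big)^a$ when $-1<a<0$) agrees with the statement and is the correct one; the paper's proof has these two cases inadvertently swapped.
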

	\begin{proof}
		Let $0<r_0<1$, $0<p<+\infty$ be two fixed real numbers and  $f \in \mathcal{A}^{p}_{a,b}(\mathbb{B}_{n})$. Then there exists $z_0$ with $|z_0|=r_0$ such that  $$\sup_{B(0,r_{0})} |f|^{p}=  |f(z_{0})|^{p}.$$
		In view of the sub-harmonicity of $|f|^{p}$, we obtain
		\begin{equation*}
			|f(z_{0})|^{p} \leq \frac{n!}{\pi^{n} r_{1}^{2n}} \int_{B(z_{0},r_{1})} |f(w)|^{p} d\lambda_{n}(w), \quad \text{where}\quad  r_{1}=\frac{1}{2}\min(r_{0},1-r_{0}),
		\end{equation*}
		then \begin{equation*}
			|f(z_{0})|^{p} \leq \frac{n \beta(a+1,b+n)}{ r_{1}^{2n}} \int_{B(z_{0},r_{1})} \frac{|f(w)| ^{p}}{|w|^{2b} (1-|w|^{2})^{a}} d\mu_{a,b}(w).
		\end{equation*}
		If $w \in B(z_{0},r_{1})$, then $r_{0}-r_{1}< |w|<r_{0}+r_{1}$. Thus we obtain
		$$|w|^{2b} \geq \mathfrak b(r_{0}) := \begin{cases} (r_{0}-r_{1})^{2b} & \text{if $b \geq 0$,} \\
			(r_{0}+r_{1})^{2b} & \text{if\ } -n<b<0. \end{cases}$$
		and
		$$(1-| w|^{2})^{a} \geq \mathfrak a(r_{0}) := \begin{cases} \big(1-(r_{0}-r_{1})^{2}\big)^{a} & \text{if\ }a \geq 0, \\
			\big(1-(r_{0}+r_{1})^{2}\big)^{a} & \text{if } -1<a<0. \end{cases}$$
		Hence \begin{align*}
			|f(z_{0})|^{p}  & \leq \frac{n \beta(a+1,b+n)}{r_{1}^{2n} \mathfrak a(r_{0})\mathfrak b(r_{0})} \int_{B(z_{0},r_{1})} |f(w)|^{p} d\mu_{a,b}(w) \\ \nonumber
			\ & \leq   \frac{n \beta(a+1,b+n)}{r_{1}^{2n} \mathfrak a(r_{0})\mathfrak b(r_{0})} \|f\|_{p}^{p}.
		\end{align*}
	\end{proof}
	\begin{cor}
		For every $a>0,\ b>-n$ and $0<p<+\infty$, the space $\mathcal{A}_{a,b}^{p}(\mathbb{B}_{n})$ is closed in $L^{p}(\mathbb{B}_{n},d\mu_{a,b})$, and for any $z \in \mathbb{B}_{n}$, the linear form $\delta_{z}: \mathcal{A}_{a,b}^{p}(\mathbb{B}_{n}) \longrightarrow \mathbb{C}$ defined by $\delta_{z}(f)=f(z)$ is bounded on $\mathcal{A}_{a,b}^{p}(\mathbb{B}_{n})$.
	\end{cor}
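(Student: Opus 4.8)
The plan is to read off both assertions from the pointwise estimate of the preceding Proposition, combined with the standard fact that a locally uniform limit of holomorphic functions is holomorphic. For the boundedness of $\delta_z$, fix $z\in\mathbb B_n$ and pick any $r_0\in(|z|,1)$, so that $z\in\overline B(0,r_0)$. The Proposition applied with this $r_0$ gives, for every $f\in\mathcal A^p_{a,b}(\mathbb B_n)$,
\[
|\delta_z(f)|=|f(z)|\le\Big(\tfrac{n\beta(a+1,b+n)}{c(r_0)}\Big)^{1/p}\,\|f\|_p .
\]
Since $\delta_z$ is linear, this is exactly the asserted boundedness when $p\ge 1$; for $0<p<1$ the same inequality yields continuity of $\delta_z$ with respect to the complete translation-invariant metric $d(f,g)=\|f-g\|_p^p$, using the $p$-subadditivity $\|f+g\|_p^p\le\|f\|_p^p+\|g\|_p^p$ in place of the triangle inequality.

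For closedness, let $(f_k)$ be a sequence in $\mathcal A^p_{a,b}(\mathbb B_n)$ converging in $L^p(\mathbb B_n,d\mu_{a,b})$ to some $f\in L^p(\mathbb B_n,d\mu_{a,b})$; in particular $(f_k)$ is $L^p$-Cauchy. Fixing $r_0\in(0,1)$ and applying the Proposition to the holomorphic functions $f_k-f_j\in\mathcal A^p_{a,b}(\mathbb B_n)$ gives
\[
\sup_{\overline B(0,r_0)}|f_k-f_j|^p\le \tfrac{n\beta(a+1,b+n)}{c(r_0)}\,\|f_k-f_j\|_p^p\longrightarrow 0,
\]
so $(f_k)$ is uniformly Cauchy on $\overline B(0,r_0)$ for every $r_0<1$, hence converges locally uniformly on $\mathbb B_n$ to a function $g$ that is holomorphic on $\mathbb B_n$ by the Weierstrass theorem. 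It then remains to identify $f$ with $g$: extracting a subsequence $(f_{k_\ell})$ that converges to $f$ $\mu_{a,b}$-almost everywhere, and noting that $(f_{k_\ell})$ converges to $g$ everywhere on $\mathbb B_n$, we conclude $f=g$ $\mu_{a,b}$-a.e.; thus $f$ admits the holomorphic representative $g$ and lies in $\mathcal A^p_{a,b}(\mathbb B_n)$.

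I do not expect a genuine obstacle here: the analytic content is entirely in the Proposition, and what remains is bookkeeping. The two points requiring a little care are, first, that the measure $d\mu_{a,b}$ is mutually absolutely continuous with Lebesgue measure on $\mathbb B_n$ (its density is positive on all of $\mathbb B_n$ when $-n<b\le 0$, and positive off the single point $0$ when $b>0$), so that the almost-everywhere identifications above are unambiguous as statements about holomorphic functions; and, second, interpreting ``closed'' and ``bounded linear form'' in the quasi-Banach sense when $0<p<1$, which only affects the wording, not the argument. The hypothesis $a>0$ is more restrictive than the Proposition actually needs, and plays no additional role in the proof.
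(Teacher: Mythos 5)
Your proof is correct and follows essentially the same route as the paper's: the pointwise estimate of the preceding Proposition yields both the boundedness of $\delta_z$ and locally uniform convergence on compact subsets, hence a holomorphic limit. Your write-up is in fact slightly more careful than the paper's, which asserts directly that the $L^p$-convergent sequence converges to $f$ uniformly on compacts, whereas you correctly pass through the uniform Cauchy property and identify the locally uniform limit with $f$ via an a.e.-convergent subsequence.
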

	\begin{proof}
		Let $(f_{n})_{n} \subset  \mathcal{A}_{a,b}^{p}(\mathbb{B}_{n})$ be a sequence  that converges to $f \in L^{p}(\mathbb{B}_{n},d\mu_{a,b})$. Thanks to the previous proposition, the sequence $(f_{n})_{n}$ converges to $f$ uniformly on every compact subset of $\mathbb{B}_{n}$. Hence, the function $f$ is holomorphic on $\mathbb{B}_{n}$, and we conclude that $f \in  \mathcal{A}_{a,b}^{p}(\mathbb{B}_{n})$. In addition, we have from the previous proposition: \begin{equation*}
			|\delta_{z}(f)| = |f(z)| \leq C \|f\|_{p}
		\end{equation*}
		for every $f \in  \mathcal{A}_{a,b}^{p}(\mathbb{B}_{n})$. This shows the continuity of the Dirac form $\delta_{z}$ on  $\mathcal{A}_{a,b}^{p}(\mathbb{B}_{n})$.
		
	\end{proof}

	\section{Reproducing Kernel of $\mathcal A^{2}_{a,b}(\mathbb{B}_{n})$}
	Since   $\mathcal{A}_{a,b}^{2}(\mathbb{B}_{n})$ is closed in the Hilbert space $L^{2}(\mathbb{B}_{n},d\mu_{a,b})$ then we conclude   that $\mathcal{A}_{a,b}^{2}(\mathbb{B}_{n})$ is also a Hilbert space. Moreover, as the Dirac operator $\delta_z$ is continuous on $\mathcal{A}_{a,b}^{2}(\mathbb{B}_{n})$ then thanks to the Riesz representation theorem, we deduce that for every $z\in \mathbb B_n$, there exists $\mathbb K_{a,b}(.,z)\in \mathcal{A}_{a,b}^{2}(\mathbb{B}_{n})$ such that for every $f\in \mathcal{A}_{a,b}^{2}(\mathbb{B}_{n})$, $\delta_z(f)=\langle f,\mathbb K_{a,b}(.,z)\rangle_{a,b}$. i.e.
	\begin{equation*}
		f(z)= \int_{\mathbb{B}_{n}} f(w) \overline{\mathbb{K}_{a,b}(w,z)} d\mu_{a,b}(w).
	\end{equation*}
	The function $\mathbb{K}_{a,b}$ is called the reproducing (Bergman) kernel of $ \mathcal{A}^{2}_{a,b}(\mathbb B_n)$.
	
	\begin{thm}	
		The reproducing Bergman Kernel $\mathbb{K}_{a,b} $ of $\mathcal{A}_{a,b}^{2}(\mathbb{B}_{n})$ is given by $\mathbb{K}_{a,b}(w,z)=\mathcal{K}_{a,b}(\langle z,w\rangle)$ where
		$$\mathcal{K}_{a,b}(\xi) = \ _2F_1 \left (\begin{matrix} n,\  a+b+n+1 \\
			b+n \end{matrix} \middle | \xi\right ) =\frac{1}{(1- \xi )^{a+n+1}} \ _2F_1 \left (\begin{matrix} b,\ -(a+1) \\
			b+n \end{matrix} \middle | \xi\right ). $$
	\end{thm}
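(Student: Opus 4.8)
The plan is to produce an explicit orthonormal basis of $\mathcal{A}_{a,b}^{2}(\mathbb B_n)$ consisting of normalized monomials and then to sum the associated kernel series. First I would observe that $d\mu_{a,b}$ is invariant under the torus action $z\mapsto(e^{i\theta_1}z_1,\dots,e^{i\theta_n}z_n)$ (it is in fact radial), which forces the monomials $\{z^{\alpha}:\alpha\in\mathbb N^{n}\}$ to be pairwise orthogonal in $L^{2}(\mathbb B_n,d\mu_{a,b})$. Passing to polar coordinates $z=r\xi$ with $r\in(0,1)$, $\xi\in\mathbb{S}^{n}$, using the sphere integral \eqref{eq1}, and reducing the radial integral to a beta function via the substitution $t=r^{2}$, one gets
\begin{equation*}
	\|z^{\alpha}\|_{a,b,2}^{2}=\frac{\alpha!}{(n)_{|\alpha|}}\,\frac{(b+n)_{|\alpha|}}{(a+b+n+1)_{|\alpha|}} .
\end{equation*}
The normalizing constant in $d\mu_{a,b}$ is precisely what makes $\|1\|_{a,b,2}=1$, consistent with the $\alpha=0$ case of this formula.

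Next I would verify that $e_{\alpha}:=z^{\alpha}/\|z^{\alpha}\|_{a,b,2}$ form a \emph{complete} orthonormal system in $\mathcal{A}_{a,b}^{2}(\mathbb B_n)$. Completeness reduces to the statement that the Taylor expansion of any $f\in\mathcal{A}_{a,b}^{2}(\mathbb B_n)$ converges to $f$ in the $\|\cdot\|_{a,b,2}$-norm; this follows from the orthogonality of distinct homogeneous components, a dilation argument applied to $f_{r}(z)=f(rz)$, and monotone convergence, exactly as in the classical case treated in \cite{Zh}. Granting this, the reproducing kernel is the locally uniformly convergent series
\begin{equation*}
	\mathbb{K}_{a,b}(w,z)=\sum_{\alpha} e_{\alpha}(z)\,\overline{e_{\alpha}(w)}=\sum_{\alpha}\frac{(n)_{|\alpha|}\,(a+b+n+1)_{|\alpha|}}{\alpha!\,(b+n)_{|\alpha|}}\,z^{\alpha}\overline{w}^{\alpha},
\end{equation*}
convergence being guaranteed by the growth estimate of the Corollary (equivalently, by direct comparison once the closed form below is obtained).

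Then I would collect the terms of fixed total degree $k=|\alpha|$ and apply the multinomial identity $\sum_{|\alpha|=k}\frac{z^{\alpha}\overline{w}^{\alpha}}{\alpha!}=\frac{\langle z,w\rangle^{k}}{k!}$, which gives
\begin{equation*}
	\mathbb{K}_{a,b}(w,z)=\sum_{k=0}^{+\infty}\frac{(n)_{k}\,(a+b+n+1)_{k}}{(b+n)_{k}}\,\frac{\langle z,w\rangle^{k}}{k!}={}_2F_1\left(\begin{matrix} n,\ a+b+n+1\\ b+n\end{matrix}\ \middle|\ \langle z,w\rangle\right),
\end{equation*}
which is the first formula with $\xi=\langle z,w\rangle$. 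The second expression then comes from Euler's transformation ${}_2F_1(c_1,c_2;d;\xi)=(1-\xi)^{\,d-c_1-c_2}\,{}_2F_1(d-c_1,d-c_2;d;\xi)$ applied with $(c_1,c_2,d)=(n,\,a+b+n+1,\,b+n)$, since $d-c_1-c_2=-(a+n+1)$, $d-c_1=b$ and $d-c_2=-(a+1)$.

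The norm evaluation and the combinatorial regrouping are routine; the one genuinely delicate point is the completeness of the monomial system, i.e. the density of polynomials in $\mathcal{A}_{a,b}^{2}(\mathbb B_n)$, because this space is defined as holomorphic functions lying in $L^{2}(\mathbb B_n,d\mu_{a,b})$ rather than as a polynomial closure. I expect this $L^{2}$-convergence of Taylor series — established through the homogeneous decomposition and a dilation/monotone-convergence argument — to be the main (though standard) obstacle.
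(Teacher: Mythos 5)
Your proposal is correct and follows essentially the same route as the paper: orthogonality of monomials by torus invariance, evaluation of $\|z^{\alpha}\|_{a,b,2}^{2}$ via polar coordinates and a beta integral (your formula agrees with the paper's normalization of $e_{\alpha}$), summation of the kernel series with the multinomial identity, and Euler's transformation for the second expression (the paper cites the same identity from a formula table). The only difference is that you explicitly flag and sketch the completeness of the monomial system, which the paper simply asserts; that is a welcome addition rather than a divergence.
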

	\begin{proof}
		The last equality is due to a formula on page 47 in \cite{Ma-Ob-So}. For the first one, we claim that if we set
		\begin{equation*}
			e_{\alpha}(z)=  \sqrt{\frac{(|\alpha| +n-1)! \Gamma(b+n) \Gamma(|\alpha| +b+n+a+1) }{(n-1)!\alpha! \Gamma(a+b+n+1)\Gamma(|\alpha| +b+n)}} z^{\alpha}.
		\end{equation*}
		for every $\alpha \in \mathbb{N}^{n}$, the sequence  $\big(e_{\alpha}\big)_{\alpha \in \mathbb{N}^{n}}$ is a Hilbert basis of  $\mathcal{A}_{a,b}^{2}(\mathbb{B}_{n})$.  Hence the reproducing kernel of $\mathcal{A}^{2}_{a,b}(\mathbb{B}_{n})$ is given by
		\begin{align*}
			\mathbb{K}_{a,b}(w,z)& = \sum_{k=0}^{+\infty} \sum_{|\alpha|=k} e_{\alpha}(w) \overline{e_{\alpha}(z)} \\
			& =  \sum_{k=0}^{+\infty} \sum_{|\alpha|=k}  \frac{(|\alpha|+n-1)! \Gamma(b+n) \Gamma(| \alpha | +b+n+a+1) }{(n-1)!\alpha! \Gamma(a+b+n+1)\Gamma(| \alpha | +b+n)} w^{\alpha} \overline{z^{\alpha}} \\
			\ & =  \sum_{k=0}^{+\infty} \frac{(k +n-1)!\Gamma(b+n)}{(n-1)! \Gamma (a+b+n+1)} \frac{ \Gamma(k +b+n+a+1) }{\Gamma(k +b+n)k!} \sum_{|\alpha|=k} \frac{k!}{\alpha!} w^{\alpha} \overline{z^{\alpha}}. \\
		\end{align*}
		Since \begin{equation*}
			\sum_{|\alpha|=k} \frac{k!}{\alpha!} w^{\alpha} \overline{z^{\alpha}} = \langle w,z \rangle^{k},
		\end{equation*}
		then \begin{align*}
			\mathbb{K}_{a,b}(w,z)& = \sum_{k=0}^{+\infty} \frac{(n)_{k}(a+b+n+1)_{k}}{(b+n)_{k}} \frac{\langle w,z \rangle^{k}}{k!} \\ \nonumber
			\ & = {}_2F_1 \left (\begin{matrix} n,\ a+b+n+1 \\
				b+n \end{matrix} \middle | \langle w,z \rangle \right )\\
			&=:\mathcal K_{a,b}(\langle w,z \rangle).
		\end{align*}
	\end{proof}
	\begin{cor}
		If $\mathbb{P}_{a,b}$ denotes the orthogonal projection from $L^{2}(\mathbb{B}_{n},d\mu_{a,b})$ onto $\mathcal{A}^{2}_{a,b}(\mathbb{B}_{n})$ then for every $f \in L^{2}(\mathbb{B}_{n},d\mu_{a,b})$,
		\begin{equation}\label{e1}
			\mathbb{P}_{a,b} f(z)= \int_{\mathbb{B}_{n}} f(w)\mathcal K_{a,b}(\langle z,w\rangle) d\mu_{a,b}(w).
		\end{equation}
	\end{cor}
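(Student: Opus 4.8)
The plan is to deduce \eqref{e1} directly from the reproducing property recalled at the beginning of this section together with the defining property of the orthogonal projection. Fix $f\in L^{2}(\mathbb B_n,d\mu_{a,b})$ and $z\in\mathbb B_n$, and consider the orthogonal decomposition $f=\mathbb P_{a,b}f+g$, where $g$ belongs to the orthogonal complement of $\mathcal A^{2}_{a,b}(\mathbb B_n)$ inside $L^{2}(\mathbb B_n,d\mu_{a,b})$. Since $\mathbb P_{a,b}f\in\mathcal A^{2}_{a,b}(\mathbb B_n)$, applying the reproducing identity to it yields
\[
\mathbb P_{a,b}f(z)=\big\langle \mathbb P_{a,b}f,\mathbb K_{a,b}(\cdot,z)\big\rangle_{a,b}.
\]
Because $\mathbb K_{a,b}(\cdot,z)\in\mathcal A^{2}_{a,b}(\mathbb B_n)$ (this is exactly how it was produced, via the Riesz representation theorem) while $g\perp\mathcal A^{2}_{a,b}(\mathbb B_n)$, we have $\langle g,\mathbb K_{a,b}(\cdot,z)\rangle_{a,b}=0$, and therefore
\[
\mathbb P_{a,b}f(z)=\big\langle f,\mathbb K_{a,b}(\cdot,z)\big\rangle_{a,b}=\int_{\mathbb B_n}f(w)\,\overline{\mathbb K_{a,b}(w,z)}\,d\mu_{a,b}(w),
\]
the last equality being merely the definition of the $L^{2}(\mathbb B_n,d\mu_{a,b})$ inner product, which is legitimate since $\mathbb K_{a,b}(\cdot,z)$ lies in that space.

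It then only remains to rewrite $\overline{\mathbb K_{a,b}(w,z)}$ as $\mathcal K_{a,b}(\langle z,w\rangle)$. For this I would invoke the series expansion established in the previous theorem,
\[
\mathbb K_{a,b}(w,z)=\sum_{k=0}^{+\infty}\frac{(n)_{k}(a+b+n+1)_{k}}{(b+n)_{k}}\,\frac{\langle w,z\rangle^{k}}{k!}=\mathcal K_{a,b}(\langle w,z\rangle),
\]
which converges absolutely because $|\langle w,z\rangle|\leq|w|\,|z|<1$. All its coefficients are real, so termwise conjugation gives $\overline{\mathcal K_{a,b}(\xi)}=\mathcal K_{a,b}(\overline\xi)$ on the open unit disc; combining this with $\overline{\langle w,z\rangle}=\langle z,w\rangle$ produces $\overline{\mathbb K_{a,b}(w,z)}=\mathcal K_{a,b}(\langle z,w\rangle)$. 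Substituting into the displayed integral gives precisely \eqref{e1}.

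No genuine obstacle is anticipated: the result is a formal consequence of the Hilbert space structure and the explicit form of the kernel, and the only technical points (membership of $\mathbb K_{a,b}(\cdot,z)$ in $\mathcal A^{2}_{a,b}(\mathbb B_n)$ and the validity of termwise conjugation) are immediate. If one prefers to avoid introducing $g$ explicitly, the same conclusion follows from self-adjointness of $\mathbb P_{a,b}$ together with the identity $\mathbb P_{a,b}\big(\mathbb K_{a,b}(\cdot,z)\big)=\mathbb K_{a,b}(\cdot,z)$, which gives $\langle\mathbb P_{a,b}f,\mathbb K_{a,b}(\cdot,z)\rangle_{a,b}=\langle f,\mathbb P_{a,b}\mathbb K_{a,b}(\cdot,z)\rangle_{a,b}=\langle f,\mathbb K_{a,b}(\cdot,z)\rangle_{a,b}$.
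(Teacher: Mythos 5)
Your argument is correct and is exactly the intended one: the paper states this as an immediate corollary, relying on the reproducing identity applied to $\mathbb P_{a,b}f\in\mathcal A^{2}_{a,b}(\mathbb B_n)$, the orthogonality of $f-\mathbb P_{a,b}f$ to $\mathbb K_{a,b}(\cdot,z)$, and the real-coefficient expansion $\mathbb K_{a,b}(w,z)=\mathcal K_{a,b}(\langle w,z\rangle)$ giving $\overline{\mathbb K_{a,b}(w,z)}=\mathcal K_{a,b}(\langle z,w\rangle)$. Nothing to add.
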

	\begin{rem}
		Using the density of $L^2(\mathbb{B}_{n},d\mu_{a,b})$ in $L^1(\mathbb{B}_{n},d\mu_{a,b})$, one can deduce that the integral formula \eqref{e1}, defines $\mathbb P_{a,b}$ as a linear operator on $L^1(\mathbb{B}_{n},d\mu_{a,b})$.
	\end{rem}
	Now, we will  establish the boundedness of the Bergman projection $\mathbb{P}_{a,b}$ on certain $L^{p}$ spaces, and for this, we need the following lemmas:
	\begin{lem}
		For every $-1<a<+\infty$ and $b>-n$, we set \begin{align*}
			I_{s}(z)&=\int_{\mathbb{B}_{n}} \frac{(1-| w |^{2})^{a} | w |^{2b}}{| 1- \langle z,w \rangle |^{a+n+1+s}} d\lambda_{n}(w) \\ \nonumber
			\ & = \frac{\pi^{n}}{(n-1)!} \beta(a+1,b+n) \ _3F_2 \left (\begin{matrix} \gamma,\  \gamma,\ b+n \\
				n,\  a+b+n+1 \end{matrix} \middle |  |z|^{2} \right )
		\end{align*}
		where $\gamma=(a+n+1+s)/2$. In particular, we have the following asymptotic properties \begin{enumerate}
			\item  If  $s < 0$, $I_{s}$ is  bounded  in $\mathbb{B}_{n}$.
			\item  If $s=0$, then \begin{equation*}
				I_{s}(z) \approx \log \bigg (\frac{1}{1-|z|^{2}}\bigg), \qquad 	as \  |z| \to 1^{-}.
			\end{equation*}
			\item  If $s>0$, then \begin{equation*}
				I_{s}(z) \approx \frac{1}{(1-|z|^{2})^{s}} , \qquad 	as \  |z| \to 1^{-}.
			\end{equation*}
		\end{enumerate}
	\end{lem}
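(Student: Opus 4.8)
The plan is to obtain the closed form by integrating the expanded kernel term by term, and then to read off the three regimes from the coefficients of the resulting ${}_3F_2$.

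\textbf{Step 1 (the hypergeometric identity).} Put $2\gamma=a+n+1+s$ and write
\[
\frac{1}{|1-\langle z,w\rangle|^{a+n+1+s}}=(1-\langle z,w\rangle)^{-\gamma}(1-\langle w,z\rangle)^{-\gamma}=\sum_{j,k\ge0}\frac{(\gamma)_j(\gamma)_k}{j!\,k!}\,\langle z,w\rangle^{j}\langle w,z\rangle^{k}.
\]
Insert this into the definition of $I_s$ and pass to polar coordinates $w=r\xi$, $r\in(0,1)$, $\xi\in\mathbb S^{n}$, so that $d\lambda_n(w)=r^{2n-1}\,dr\,d\sigma_n(\xi)$, $\langle z,w\rangle=r\langle z,\xi\rangle$ and $\langle w,z\rangle=r\overline{\langle z,\xi\rangle}$. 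Invariance of $\sigma_n$ under $\xi\mapsto e^{i\theta}\xi$ kills every term with $j\ne k$; for $j=k$, expanding $|\langle z,\xi\rangle|^{2j}$ by the multinomial theorem and using \eqref{eq1} gives $\int_{\mathbb S^n}|\langle z,\xi\rangle|^{2j}\,d\sigma_n(\xi)=\frac{2\pi^n j!}{(j+n-1)!}\,|z|^{2j}$, while the radial part is $\int_0^1 r^{2j+2n+2b-1}(1-r^2)^a\,dr=\tfrac12\beta(j+n+b,\,a+1)$ after the substitution $t=r^2$. Rewriting the $\Gamma$–factors $\Gamma(j+n)$, $\Gamma(j+n+b)$, $\Gamma(j+n+b+a+1)$ via the Pochhammer symbols $(n)_j$, $(b+n)_j$, $(a+b+n+1)_j$ and factoring out $\frac{\pi^n}{(n-1)!}\beta(a+1,b+n)$ yields exactly the stated series
\[
I_s(z)=\frac{\pi^n}{(n-1)!}\beta(a+1,b+n)\sum_{j\ge0}c_j\,|z|^{2j},\qquad c_j:=\frac{(\gamma)_j^2\,(b+n)_j}{(n)_j\,(a+b+n+1)_j\,j!}>0.
\]

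\textbf{Step 2 (coefficient asymptotics).} Using $(\alpha)_j/j!=\Gamma(j+\alpha)/(\Gamma(\alpha)\Gamma(j+1))\sim j^{\alpha-1}/\Gamma(\alpha)$ one finds
\[
c_j\ \sim\ \frac{\Gamma(n)\,\Gamma(a+b+n+1)}{\Gamma(\gamma)^2\,\Gamma(b+n)}\;j^{\,s-1}\qquad(j\to\infty),
\]
the exponent being $2\gamma-a-n-2=s-1$; equivalently the parametric excess of the ${}_3F_2$ is $\big(n+(a+b+n+1)\big)-\big(2\gamma+(b+n)\big)=-s$. All the Pochhammer arguments $\gamma,\ b+n,\ n,\ a+b+n+1$ are positive under the hypotheses $a>-1,\ b>-n$ (and $s\ge 0$ in cases (2)–(3)), so these asymptotics are unambiguous.

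\textbf{Step 3 (the three regimes).} If $s<0$ then $\sum_j j^{s-1}<\infty$, hence $\sum_j c_j<\infty$ and $I_s(z)\le\frac{\pi^n}{(n-1)!}\beta(a+1,b+n)\sum_j c_j$ is bounded on $\mathbb B_n$, which is (1). If $s=0$, compare $\sum_j c_j x^j$ with $\sum_{j\ge1}x^j/j=\log\frac1{1-x}$; if $s>0$, compare with $\sum_j\frac{(s)_j}{j!}x^j=(1-x)^{-s}$, noting $(s)_j/j!\sim j^{s-1}/\Gamma(s)$. In both cases one invokes the elementary Abelian fact that $\sum_j a_j x^j\sim\sum_j b_j x^j$ as $x\to1^-$ whenever $a_j\sim b_j>0$ and $\sum_j b_j=+\infty$ (split the sum at an index $N$ with $|a_j/b_j-1|<\varepsilon$ for $j\ge N$, then let $x\to1^-$). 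Taking $x=|z|^2\to1^-$ gives $I_s(z)\approx\log\frac1{1-|z|^2}$ in case (2) and $I_s(z)\approx(1-|z|^2)^{-s}$ in case (3).

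\textbf{Main obstacle.} The only non-mechanical point is Step 3's transfer from the coefficient asymptotics to the boundary behaviour of the generating function, i.e. the Abelian comparison above. A route that avoids the series is to split $I_s$ into the integral over $\{|w|\le\tfrac12\}$ (bounded, since the whole integrand is bounded there) and the integral over $\{|w|>\tfrac12\}$ (where $c\le|w|^{2b}\le C$ for positive constants), the latter being comparable to the classical integral $\int_{\mathbb B_n}(1-|w|^2)^a|1-\langle z,w\rangle|^{-(n+1+a+s)}\,d\lambda_n(w)$ whose asymptotics are standard; either path reaches the same conclusion, while Step 1 is pure bookkeeping.
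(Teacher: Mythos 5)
Your proposal is correct and follows essentially the same route as the paper: expand the kernel as a double series, integrate term by term using rotation invariance of $\sigma_n$ together with \eqref{eq1} and the beta-function radial integral to obtain the ${}_3F_2$ representation, then read off the boundary behaviour from the coefficient asymptotics $c_j\sim Cj^{s-1}$ (the paper does this via Stirling's formula). Your Step 3 merely makes explicit the Abelian comparison that the paper leaves implicit.
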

        Here we use the notation $\theta_{1} \approx \theta_{2}$ to indicate that there exists $0<c_{1}<c_{2}$ such that we have $c_{1} \theta_{1}(z) \leq \theta_{2} \leq c_{2} \theta_{1}(z) $. i.e. $\theta_1\lesssim \theta_2$ and  $\theta_2\lesssim \theta_1$.
	\begin{proof}
		Let $\gamma=(a+n+1+s)/2$, then \begin{equation*}
			\frac{1}{| 1- \langle z,w \rangle |^{a+n+1+s}} = \sum_{k,j=0}^{+\infty} \frac{\Gamma(k+\gamma)}{k! \Gamma(\gamma)} \frac{\Gamma(j+\gamma)}{j! \Gamma(\gamma)} \langle z,w \rangle^{k} \langle w,z \rangle^{j}.
		\end{equation*}
		Hence \begin{equation*}
			I_{s}(z)= \int_{\mathbb{B}_{n}} (1-| w |^{2})^{a} | w |^{2b} \sum_{k,j=0}^{+\infty} \frac{\Gamma(k+\gamma)}{k! \Gamma(\gamma)} \frac{\Gamma(j+\gamma)}{j! \Gamma(\gamma)} \langle z,w \rangle^{k} \langle w,z \rangle^{j} d\lambda_{n}(w),
		\end{equation*}	
		we integrate in polar coordinates to obtain \begin{equation*}
			I_{s}(z)= \sum_{k,j=0}^{+\infty} \frac{\Gamma(k+\gamma)}{k! \Gamma(\gamma)} \frac{\Gamma(j+\gamma)}{j! \Gamma(\gamma)} \int_{0}^{1} (1-r^{2})^{a} r^{2b+2n-1+k+j} dr \int_{\mathbb{S}^{n}} \langle z, \xi \rangle^{k} \langle \xi , z \rangle^{j} d\sigma_{n}(\xi).
		\end{equation*}
		We assume that $U \xi=Y$, where $U$ is a unitary matrix such that the first coordinate of $Y$ is $y_{1}=\frac{\langle \xi,z\rangle}{|z|}$. Then \begin{equation*}
			I_{s}(z) =  \sum_{k,j=0}^{+\infty} \frac{\Gamma(k+\gamma)}{k! \Gamma(\gamma)} \frac{\Gamma(j+\gamma)}{j! \Gamma(\gamma)} |z|^{j}|z|^{k} \int_{0}^{1} (1-r^{2})^{a} r^{2b+2n-1+k+j} dr  \int_{\mathbb{S}^{n}} y_{1}^{j} \overline{y_{1}}^{k} d\sigma_{n}(Y).
		\end{equation*}
	Using Equation \eqref{eq1}, one can see that \begin{equation*}
			\int_{\mathbb{S}^{n}} y_{1}^{j} \overline{y_{1}}^{k} d\sigma_{n}(Y) = \frac{2 \pi^{n} k!}{(k +n-1)!}\delta_{k,j},
		\end{equation*}
		with $\delta_{k,j}$ is the Kronecker delta symbol given by $\delta_{k,j}=1$ or $0$ according to $k=j$ or $k \neq j$. It follows that \begin{align*}
			I_{s}(z)& = \sum_{k=0}^{+\infty} \frac{\Gamma(k+\gamma)^{2}}{k! \Gamma(\gamma)^{2}}  \frac{2\pi^{n}}{(n-1+k)!} | z |^{2k} \int_{0}^{1} (1-r^{2})^{a} r^{2b+2n+2k-1}dr \\ \nonumber
			\ & = \frac{\pi^{n}}{(n-1)!} \sum_{k=0}^{+\infty}  \frac{\Gamma(k+\gamma)^{2}}{k! \Gamma(\gamma)^{2}}  \frac{(n-1)!}{(n-1+k)!} \frac{\Gamma(a+1)\Gamma(b+n+k)}{\Gamma(a+1+b+n+k)} | z |^{2k} \\ \nonumber
			\ & = \frac{\pi^{n}}{(n-1)!} \frac{\Gamma(a+1)\Gamma(b+n)}{\Gamma(a+1+b+n)} \sum_{k=0}^{+\infty} \frac{(\gamma)_{k} (\gamma)_{k}}{(n)_{k}} \frac{(b+n)_{k}}{(a+b+n+1)_{k}} \frac{| z |^{2k}}{k!} \\ \nonumber
			\ & = \frac{\pi^{n}}{(n-1)!} \frac{\Gamma(a+1)\Gamma(b+n)}{\Gamma(a+1+b+n)} \ {}_3F_2 \left (\begin{matrix} \gamma, & \gamma, & b+n \\
				&  	n, & a+b+n+1 \ \ \ \end{matrix} \middle | | z |^{2} \right ).
		\end{align*}
		Now, by applying Stirling's formula, we conclude that \begin{align*}
			I_{s}(z) & \approx \sum_{k=0}^{+\infty} (k+1)^{2 \gamma +b+n -(a+b+n+1)-n-1} | z |^{2k} \\ \nonumber
			\ & \approx \sum_{k=0}^{+\infty} (k+1)^{2 \gamma -a-n-2} | z |^{2k} \\
            & \approx\sum_{k=0}^{+\infty} (k+1)^{s-1} | z |^{2k},
		\end{align*}
		as $| z | \to 1^{-}$. This completes the proof of the Lemma.
	\end{proof}
	Using the same techniques used in the previous lemma, one can prove the following lemma:
	\begin{lem}
		For every $-1<a,c<+\infty$ and $d,b>-n$, we set \begin{equation*}
			J_{s}(z)= \int_{\mathbb{B}_{n}} \frac{ | Q_{a,b}(\langle z,w \rangle ) | (1-| w |^{2})^{c} | w |^{2d}}{| 1- \langle z,w \rangle |^{c+n+1+s}} d\lambda_{n}(w)\quad \text{where}\quad Q_{a,b}(\xi ) = {}_2F_1 \left (\begin{matrix} b,\  -(a+1) \\
				b+n \end{matrix} \middle | \xi \right ).
		\end{equation*}	
		If $Q_{a,b}$ has no  zero \footnote{ Some comments about this technical assumption will be added at the end of the paper.} in the closed unit disk $\overline{\mathbb D}$ of $\mathbb C$ then $$J_{s}(z) \approx \begin{cases} 1 & \text{if }s < 0, \\
			\ds\log \left(\frac{1}{1-| z |^{2}}\right)	 & \text{if } s=0 \\
			\ds\frac{1}{(1-| z | ^{2})^{s}}  & \text{if }s>0 \end{cases}$$
		as $| z | \to 1^{-}$.
	\end{lem}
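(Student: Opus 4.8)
The plan is to reduce the claim to the estimate for $I_s$ obtained in the previous lemma by squeezing the factor $|Q_{a,b}(\langle z,w\rangle)|$ between two positive constants. The first step is to check that $Q_{a,b}={}_2F_1\!\left(b,-(a+1);b+n\mid\cdot\right)$ extends to a continuous function on the closed disk $\overline{\mathbb D}$. If $a+1$ is a nonnegative integer, the defining series terminates and $Q_{a,b}$ is a polynomial; otherwise the series is genuinely infinite, but since the excess parameter $(b+n)-b-\bigl(-(a+1)\bigr)=n+a+1$ is strictly positive (because $a>-1$ and $n\ge1$), the Gauss series converges absolutely and uniformly on $\overline{\mathbb D}$. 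In either case $|Q_{a,b}|$ is continuous on the compact set $\overline{\mathbb D}$, hence $|Q_{a,b}|\le M$ there for some $M<\infty$; and, under the hypothesis that $Q_{a,b}$ has no zero in $\overline{\mathbb D}$, also $|Q_{a,b}|\ge m$ there for some $m>0$.

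Next I would use that, for $z,w\in\mathbb B_n$, the Cauchy-Schwarz inequality gives $|\langle z,w\rangle|\le|z|\,|w|<1$, so $\langle z,w\rangle\in\mathbb D$ and therefore $m\le|Q_{a,b}(\langle z,w\rangle)|\le M$ for all such $z,w$. Substituting these bounds into the integral defining $J_s$ yields
\[
m\,\widetilde I_s(z)\ \le\ J_s(z)\ \le\ M\,\widetilde I_s(z),\qquad \widetilde I_s(z):=\int_{\mathbb B_n}\frac{(1-|w|^2)^c\,|w|^{2d}}{|1-\langle z,w\rangle|^{c+n+1+s}}\,d\lambda_n(w).
\]
Now $\widetilde I_s$ is precisely the integral $I_s$ of the previous lemma with the parameters $(a,b)$ replaced by $(c,d)$, and the standing hypotheses $-1<c<+\infty$ and $d>-n$ are exactly those needed to apply it. For $s=0$ and $s>0$ the previous lemma provides the two-sided estimates $\widetilde I_s(z)\approx\log\frac1{1-|z|^2}$ and $\widetilde I_s(z)\approx(1-|z|^2)^{-s}$, which transfer immediately to $J_s$ through the displayed inequality. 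For $s<0$ the previous lemma only gives the upper bound $\widetilde I_s\lesssim1$; the matching lower bound $J_s(z)\gtrsim1$ I would obtain directly by discarding all but a fixed ball: restricting the integral to $\{|w|\le1/2\}$, where $|\langle z,w\rangle|\le1/2$ and hence the kernel $|1-\langle z,w\rangle|^{-(c+n+1+s)}$ is bounded below by a positive constant independent of $z$, gives $J_s(z)\ge m\,\widetilde I_s(z)\ge c'\int_{|w|\le1/2}(1-|w|^2)^c|w|^{2d}\,d\lambda_n(w)>0$. Combining the three cases proves the lemma.

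I do not expect a genuine obstacle here; the only delicate point is the passage of $Q_{a,b}$ to the boundary circle, which is what legitimizes the compactness argument producing $m>0$, and which is exactly where the technical no-zero hypothesis enters. If one prefers not to invoke the previous lemma as a black box, an alternative is to rerun its polar-coordinate computation with the extra factor $|Q_{a,b}(\langle z,w\rangle)|$ controlled above and below by $M$ and $m$ from the outset; this produces the same conclusion and makes transparent why only the decay exponent $s$ governs the asymptotics.
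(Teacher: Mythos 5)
Your proof is correct. The paper offers no written argument for this lemma beyond the remark that the same techniques as in the previous lemma apply, so there is little to compare against line by line; but your reduction --- uniform two-sided bounds $0<m\le |Q_{a,b}|\le M$ on $\overline{\mathbb D}$, obtained from absolute and uniform convergence of the Gauss series (legitimate because $(b+n)-b+(a+1)=n+a+1>0$ and $(b+n)_k>0$ for all $k$) together with compactness and the no-zero hypothesis, followed by the sandwich $m\,\widetilde I_s\le J_s\le M\,\widetilde I_s$ with $\widetilde I_s$ the integral of the previous lemma for the parameters $(c,d)$ --- is the natural way to make that remark precise, and it is arguably more robust than literally rerunning the polar-coordinate expansion, which would be awkward with the non-(anti)holomorphic factor $|Q_{a,b}(\langle z,w\rangle)|$ sitting inside the integrand. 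You also correctly observed that for $s<0$ the previous lemma only asserts boundedness of $I_s$ from above, whereas the present statement claims the two-sided estimate $J_s\approx 1$, and you supplied the missing uniform lower bound by restricting the integration to $\{|w|\le 1/2\}$, where $1/2\le|1-\langle z,w\rangle|\le 3/2$ uniformly in $z$; this closes a small gap left implicit by the paper's one-line proof. The remaining details you use --- that $|\langle z,w\rangle|\le|z|\,|w|<1$ places the argument of $Q_{a,b}$ in $\overline{\mathbb D}$, and that $d>-n$ makes $\int_{|w|\le 1/2}(1-|w|^2)^c|w|^{2d}\,d\lambda_n(w)$ finite and positive --- are all in order.
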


	\begin{thm} \label{Theorem 2.5}
		Let $-1<a,c<+\infty$ and $d,b>-n$.  We assume that $Q_{a,b}$ has no zero in $\overline{\mathbb{D}}$. We define the two integral operators $T$ and $S$ by  \begin{align*}
			Tf(z) & = \int_{\mathbb{B}_{n}} f(w)  \mathcal{K}_{a,b}( \langle z,w \rangle ) d\mu_{a,b}(w) \\ \nonumber
			\ & = \frac{\beta(c+1,d+n)}{\beta(a+1,b+n)} \int_{\mathbb{B}_{n}} \frac{f(w)Q_{a,b}( \langle z,w \rangle )}{(1- \langle z,w \rangle )^{a+n+1}} (1-| w |^{2})^{a-c} | w |^{2b-2d} d\mu_{c,d}(w),
		\end{align*}
		and \begin{equation*}
			Sf(z) =\int_{\mathbb{B}_{n}} \frac{f(w) | Q_{a,b}( \langle z,w \rangle )|}{ | 1- \langle z,w \rangle |^{a+n+1}} (1-| w |^{2})^{a-c} | w |^{2b-2d} d\mu_{c,d}(w).
		\end{equation*}
		Then for $1 \leq p < + \infty$, the following assertions are equivalent:
		\begin{enumerate}
			\item $T$ is bounded on  $L^{p}(\mathbb{B}_{n},d\mu_{c,d})$.
			\item  $S$ is bounded on  $L^{p}(\mathbb{B}_{n},d\mu_{c,d})$.
			\item The following conditions $(C)$ are satisfied \begin{equation*}
				(C)	\quad c+1<p(a+1) \qquad \text{and} \qquad   \begin{cases} d+n<p(b+n) & \mbox{if } p > 1, \\
						d \leq b & \mbox{if }p=1 \end{cases}
			\end{equation*}
			
		\end{enumerate}
	\end{thm}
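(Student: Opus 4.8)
The plan is to establish the cycle $(3)\Rightarrow(2)\Rightarrow(1)\Rightarrow(3)$. The implication $(2)\Rightarrow(1)$ is immediate, since the kernel of $S$ is obtained from that of $T$ by replacing $Q_{a,b}(\langle z,w\rangle)$ and $(1-\langle z,w\rangle)^{-(a+n+1)}$ by their moduli while the remaining factors $(1-|w|^2)^{a-c}$ and $|w|^{2b-2d}$ are nonnegative; this gives the pointwise bound $|Tf(z)|\leq S(|f|)(z)$ on $\mathbb B_n$, whence $\|Tf\|_{L^p(d\mu_{c,d})}\leq\|S(|f|)\|_{L^p(d\mu_{c,d})}\leq\|S\|\,\|f\|_{L^p(d\mu_{c,d})}$.

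For $(3)\Rightarrow(2)$ with $1<p<+\infty$ I would run Schur's test on the positive kernel of $S$ with the auxiliary weight $h(w)=(1-|w|^2)^{-\beta}|w|^{-\alpha}$, $\beta,\alpha\geq 0$. Denoting by $p'$ the conjugate exponent and using that the factor $(1-|w|^2)^{a-c}|w|^{2b-2d}$ combines with $d\mu_{c,d}$ into $(1-|w|^2)^{a}|w|^{2b}\,d\lambda_n$, the first Schur inequality takes the form
\[
\int_{\mathbb B_n}\frac{|Q_{a,b}(\langle z,w\rangle)|}{|1-\langle z,w\rangle|^{a+n+1}}(1-|w|^2)^{a-\beta p'}|w|^{2b-\alpha p'}\,d\lambda_n(w)\ \lesssim\ h(z)^{p'},
\]
and the second is the symmetric statement in $z$ (with $p$ instead of $p'$ and an extra factor $(1-|w|^2)^{a-c}|w|^{2b-2d}$ in front). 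Both integrals are exactly of the type estimated by the lemma on the integrals $J_s$; the hypothesis that $Q_{a,b}$ has no zero in $\overline{\mathbb D}$ is what keeps $|Q_{a,b}|$ bounded above and below there, so that lemma applies with appropriately shifted parameters, producing boundary asymptotics of the form $(1-|z|^2)^{-s}$. Matching these against $h^{p'}$ and $h^{p}$ forces the conditions $0<\beta p'<a+1$, $c-a<\beta p<c+1$ on the one hand and $2(d-b)\leq\alpha p$, $\alpha p<2(d+n)$, $\alpha p'<2(b+n)$ on the other; an elementary computation shows that an admissible pair $(\beta,\alpha)$ exists precisely when $c+1<p(a+1)$ and $d+n<p(b+n)$, i.e. exactly the conditions $(C)$ for $p>1$. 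The case $p=1$ is handled directly: $S$ being positive, it is bounded on $L^1(d\mu_{c,d})$ if and only if the function $w\mapsto\int_{\mathbb B_n}(\text{kernel of }S)(z,w)\,d\mu_{c,d}(z)$ is bounded; the same lemma (now with $s=a-c$) evaluates this to something $\approx(1-|w|^2)^{a-c}|w|^{2b-2d}$ when $c<a$, and to an unbounded quantity (a logarithm when $c=a$, a negative power of $1-|w|^2$ when $c>a$), so the supremum is finite exactly when $c<a$ and $d\leq b$, which is $(C)$ for $p=1$; in particular this already gives $(2)\Leftrightarrow(3)$ when $p=1$.

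For $(1)\Rightarrow(3)$ I would argue by duality. Boundedness of $T$ on $L^p(d\mu_{c,d})$ is equivalent to boundedness of the adjoint $T^{\ast}$ on $L^{p'}(d\mu_{c,d})$, where $T^{\ast}$ is taken with respect to $\langle f,g\rangle=\int_{\mathbb B_n}f\overline g\,d\mu_{c,d}$. A Fubini computation gives
\[
T^{\ast}g(w)=\frac{\beta(c+1,d+n)}{\beta(a+1,b+n)}\,(1-|w|^2)^{a-c}|w|^{2b-2d}\int_{\mathbb B_n}\overline{\mathcal K_{a,b}(\langle z,w\rangle)}\,g(z)\,d\mu_{c,d}(z).
\]
Taking $g\equiv 1$ and expanding $\mathcal K_{a,b}$ in its Taylor series, the angular integrals $\int_{\mathbb S^n}\langle w,\xi\rangle^{k}\,d\sigma_n(\xi)$ vanish for all $k\geq 1$, leaving $T^{\ast}\mathbf 1(w)=\frac{\beta(c+1,d+n)}{\beta(a+1,b+n)}(1-|w|^2)^{a-c}|w|^{2(b-d)}$. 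Writing out $\|T^{\ast}\mathbf 1\|_{L^{p'}(d\mu_{c,d})}$ in polar coordinates, its finiteness is precisely $(a-c)p'+c>-1$ and $(b-d)p'+d+n>0$, and these two inequalities are algebraically equivalent to $c+1<p(a+1)$ and $d+n<p(b+n)$; for $p=1$ they give $c\leq a$ and $d\leq b$, and the remaining strict inequality $c<a$ at $p=1$ (the classical failure of a Bergman-type projection on $L^1$, equivalently on $L^\infty$) is recovered by testing $T^{\ast}$ instead on the unimodular functions $z\mapsto\overline{(1-\langle w_0,z\rangle)}^{\,a+n+1}/|1-\langle w_0,z\rangle|^{a+n+1}$ and letting $w_0\to\partial\mathbb B_n$.

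The main obstacle is the exponent bookkeeping in $(3)\Rightarrow(2)$: one must follow simultaneously the weight exponent $\beta$, which governs the behaviour near $\partial\mathbb B_n$ (through the factors $(1-|w|^2)^{a}$, $(1-|w|^2)^{c}$), and the weight exponent $\alpha$, which governs the behaviour near the origin (through $|w|^{2b}$, $|w|^{2d}$), check that the admissible range of each is nonempty exactly under $(C)$, and treat $p=1$ by the separate Fubini argument rather than Schur's test. A secondary difficulty is the borderline case $p=1$, $c=a$ in $(1)\Rightarrow(3)$, which is invisible to the crude test function $g\equiv 1$ and requires the sharper unimodular test above.
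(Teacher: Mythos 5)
Your proposal is correct in substance, and for the sufficiency direction $(3)\Rightarrow(2)$ (Schur's test with the weight $(1-|w|^2)^{-\beta}|w|^{-\alpha}$ for $p>1$, Fubini for $p=1$) and for $(2)\Rightarrow(1)$ it coincides with the paper's argument. Where you genuinely diverge is the necessity direction: the paper proves $(1)\Leftrightarrow(2)$ via the unimodular multiplier $\psi_z$ and then extracts $(3)$ from $(2)$ by applying $S^{*}$ to the test functions $(1-|z|^2)^{N}$, whereas you close the cycle $(1)\Rightarrow(3)\Rightarrow(2)\Rightarrow(1)$ by computing $T^{*}\mathbf 1(w)=\frac{\beta(c+1,d+n)}{\beta(a+1,b+n)}(1-|w|^2)^{a-c}|w|^{2(b-d)}$ and reading off the conditions from $\|T^{*}\mathbf 1\|_{L^{p'}(d\mu_{c,d})}<\infty$. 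For $p>1$ this is cleaner than the paper's route: it needs no asymptotic lemma, and it does not use the hypothesis that $Q_{a,b}$ is zero-free, so your argument actually localizes that technical assumption to the single borderline case $p=1$, $c=a$ (the paper's remark only says the assumption enters in $(2)\Rightarrow(3)$). You also avoid having to justify the paper's $\psi_z$ step, which is itself slightly delicate since the multiplier depends on the outer variable.

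The one genuine soft spot is precisely that borderline case. The test function $g_{w_0}(z)=\overline{(1-\langle w_0,z\rangle)}^{\,a+n+1}/|1-\langle w_0,z\rangle|^{a+n+1}$ only cancels the phase of the factor $(1-\langle w_0,z\rangle)^{-(a+n+1)}$ in $\overline{\mathcal K_{a,b}(\langle z,w_0\rangle)}$; the integrand of $T^{*}g_{w_0}(w_0)$ is then $\overline{Q_{a,b}(\langle z,w_0\rangle)}$ times a unimodular function divided by $|1-\langle w_0,z\rangle|^{a+n+1}$, whose phase still oscillates, so the integral is not bounded below by a multiple of $J_0(w_0)\approx\log\frac{1}{1-|w_0|^2}$ and no contradiction with $\|T^{*}\|_{L^\infty\to L^\infty}<\infty$ follows directly. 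The fix is to include the missing phase, i.e. to test against
\begin{equation*}
g_{w_0}(z)=\frac{|Q_{a,b}(\langle z,w_0\rangle)|}{\overline{Q_{a,b}(\langle z,w_0\rangle)}}\cdot\frac{(1-\langle z,w_0\rangle)^{a+n+1}}{|1-\langle w_0,z\rangle|^{a+n+1}},
\end{equation*}
which makes the integrand exactly $|Q_{a,b}(\langle z,w_0\rangle)|\,|1-\langle w_0,z\rangle|^{-(a+n+1)}$ and lets the $J_0$ asymptotic apply; this multiplier is well defined and unimodular precisely because $Q_{a,b}$ has no zero in $\overline{\mathbb D}$, which is where the hypothesis genuinely enters (it is the adjoint form of the paper's $\psi_z$). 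A second, purely cosmetic slip: in your $p=1$ Fubini check, when $c<a$ the lemma gives $J_{a-c}(w)\approx(1-|w|^2)^{c-a}$, so the product with the prefactor is $\approx|w|^{2(b-d)}$ rather than $(1-|w|^2)^{a-c}|w|^{2b-2d}$; the conclusion (finiteness iff $c<a$ and $d\leq b$) is unaffected.
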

	\begin{proof}
		It's clear that the boundedness of $S$ on  $L^{p}(\mathbb{B}_{n},d\mu_{c,d})$ implies the boundedness of $T$ on  $L^{p}(\mathbb{B}_{n},d\mu_{c,d})$. To prove that if $T$ is bounded  on $L^{p}(\mathbb{B}_{n},d\mu_{c,d})$ then $S$ is bounded too, it suffices to use the following transformation: \begin{equation*}
			\psi_{z}f(w)= \frac{(1- \langle z,w \rangle )^{a+n+1}  | Q_{a,b}( \langle z,w \rangle )|}{| 1- \langle z,w \rangle |^{a+n+1} Q_{a,b}( \langle z,w \rangle )}f(w).
		\end{equation*}
		Now, we assume that  $S$ is bounded on  $L^{p}(\mathbb{B}_{n},d\mu_{c,d})$. \\
		If $1<p<+\infty$, and $\frac{1}{p}+\frac{1}{q}=1$, the boundedness of $S$ on $L^{p}(\mathbb{B}_{n},d\mu_{c,d})$ is equivalent to the boundedness of the adjoint operator of $S$ on $L^{q}(\mathbb{B}_{n},d\mu_{c,d})$. It is easy to see that \begin{equation*}
			S^{*}f(z)= (1-| z |^{2})^{a-c} | z |^{2b-2d} \int_{\mathbb{B}_{n}} \frac{f(w)| Q_{a,b}( \langle z,w \rangle )|}{| 1- \langle z,w \rangle |^{a+n+1}} d\mu_{c,d}(w).
		\end{equation*}
		We apply $S^{*}$ to the function $f_{N}=(1-|z|^{2})^{N}$, for $N$ sufficiently large, so we obtain that \begin{equation*}
			\| S^{*}f_{N} |_{c,d,q}^{q} = \int_{\mathbb{B}_{n}} M  (1-| z |^{2})^{q(a-c)+c} | z |^{q(2b-2d)+2d} \times \bigg (
			\int_{\mathbb{B}_{n}} \frac{| Q_{a,b}( \langle z,w \rangle )| (1-| w |^{2})^{c+N} | w |^{2d}}{| 1- \langle z,w \rangle |^{a+n+1}} d\lambda_{n}(w) \bigg)^{q} d\lambda_{n}(z)
		\end{equation*}
		is finite, where $M= \bigg  ( \frac{(n-1)!}{\pi^{n}\beta(c+1,d+n)} \bigg)^{q+1}$. So according to the previous Lemma, we conclude that \begin{equation*}
			c+1<p(a+1) \ and \ d+n<p(b+n)
		\end{equation*}
		If $p=1$, $S$ is bounded on $L^{1}(\mathbb{B}_{n},d\mu_{c,d})$. This gives that $S^{*}$ is bounded on $L^{\infty}(\mathbb{B}_{n},d\mu_{c,d})$. By applying $S^{*}$ to the constant function $f\equiv 1$, we find \begin{equation*}
			\underset{z \in \mathbb{B}_{n}}{\sup}  (1-| z |^{2})^{a-c} | z |^{2b-2d} \int_{\mathbb{B}_{n}} \frac{ | Q_{a,b}( \langle z,w \rangle )| (1-| w |^{2})^{c} | w |^{2d}}{| 1- \langle z,w \rangle |^{a+n+1}} d\lambda_{n}(w)< + \infty.
		\end{equation*}
		Then, by applying the previous Lemma, we find that $a-c>0$, and $b-d\geq 0$. \\
		It remains to prove that $(3)$ implies $(2)$. We first consider the case $p=1$. Assume that \begin{equation*}
			a-c >0 \ \ and \ \ d\leq b.
		\end{equation*}
		We have \begin{align*}
			\| Sf \|_{c,d,1} & \leq \int_{\mathbb{B}_{n}} (1-|z|^{2})^{c} |z|^{2d} \int_{\mathbb{B}_{n}} \frac{|f(w)|(1-|w|^{2})^{2}|w|^{2b}}{|1-\langle z,w \rangle |^{a+n+1}} d\lambda_{n}(w) d\lambda_{n}(z) \\ \nonumber
			\ & \leq \int_{\mathbb{B}_{n}} |f(w)| (1-|w|^{2})^{a} |w|^{2b} \int_{\mathbb{B}_{n}} \frac{(1-|z|^{2})^{c} |z|^{2d}}{|1-\langle z,w \rangle |^{a+n+1}} d\lambda_{n}(z) d\lambda_{n}(w) \qquad (Fubini).
		\end{align*}
		Using the previous lemma, we find that \begin{equation*}
			\| Sf \|_{c,d,1}  \leq M \| f \|_{c,d,1}
		\end{equation*}
		where $M $ is a constant. So the boundedness of $S$ on  $L^{1}(\mathbb{B}_{n},d\mu_{c,d})$. \\
		For $p>1$, we appeal to Schur's test,  such that the details can be found extensively in \cite{Zh} page 45. Thus we assume that  $1<p<+\infty$, and a positive function $h(z)$ on $\mathbb{B}_{n}$ that will satisfy the assumptions
		in Schur's test, which. It turns out that such a function exists in the form \begin{equation*}
			h(z)= \frac{1}{(1-|z|^{2})^{s} |z|^{2t}}, \ \ \ \ s,t \in \mathbb{R}
		\end{equation*}
		In fact if we write \begin{equation*}
			Sf(z) =\int_{\mathbb{B}_{n}} \frac{f(w) | Q_{a,b}( \langle z,w \rangle )|}{ | 1- \langle z,w \rangle |^{a+n+1}} (1-| w |^{2})^{a-c} | w |^{2b-2d} d\mu_{c,d}(w)
		\end{equation*}
		then the conditions that the number s has to satisfy become \begin{align*}
			&	\int_{\mathbb{B}_{n}} \frac{ | Q_{a,b}( \langle z,w \rangle )|}{ | 1- \langle z,w \rangle |^{a+n+1}} (1-| w |^{2})^{a} | w |^{2b} h(w)^{q} d\lambda_{n}(w) \\ \nonumber
			\ & =  \int_{\mathbb{B}_{n}} \frac{ | Q_{a,b}( \langle z,w \rangle )|}{ | 1- \langle z,w \rangle |^{a+n+1}} (1-| w |^{2})^{a-qs} | w |^{2b-2qt} d\lambda_{n}(w) \\ \nonumber
			\ & \leq \frac{C_{1}}{(1-|z|^{2})^{sq} |z|^{2qt}}, \ \ z \in \mathbb{B}_{n}
		\end{align*}
		and \begin{align*}
			&	\int_{\mathbb{B}_{n}} \frac{ | Q_{a,b}( \langle z,w \rangle )|}{ | 1- \langle z,w \rangle |^{a+n+1}} (1-| w |^{2})^{a-c} | w |^{2b-2d} h(z)^{p} d\mu_{c,d}(z)\\ \nonumber
			\ & = (1-| w |^{2})^{a-c} | w |^{2b-2d} \int_{\mathbb{B}_{n}} \frac{ | Q_{a,b}( \langle z,w \rangle )|}{ | 1- \langle z,w \rangle |^{a+n+1}} (1-|z|^{2})^{c-sp} |z|^{2d-2pt} d\lambda_{n}(z) \\ \nonumber
			\ & \leq \frac{C_{2}}{(1-|w|^{2})^{sp}|w|^{2pt}}
		\end{align*}
		where $q$ is the conjugate exponent of $p$ and $C$ is some positive constant. According to the previous lemma, these estimates hold if \begin{equation*}
			0<s<\frac{a+1}{q}, \qquad 0<t<\frac{b+n}{q}
		\end{equation*}
		and \begin{equation*}
			\frac{c-a}{p} < s < \frac{c+1}{p}, \qquad\frac{d-b}{p} < t< \frac{d+n}{p}
		\end{equation*}
		Hypothesis $(3)$ gives that \begin{equation*}
			\left] 0, \frac{b+n}{q} \right[\cap  \left]\frac{d-b}{p},  \frac{d+n}{p} \right[\neq \emptyset
		\end{equation*}
		\begin{equation*}
			\left]\ 0, \frac{a+1}{q} \right[\cap \left]\frac{c-a}{p},\frac{c+1}{p} \right[\neq \emptyset
		\end{equation*}
		This shows that the desired $s,t$ exist, and so the operator $S$ is bounded on $L^{p}(\mathbb{B}_{n},d\mu_{c,d})$.
	\end{proof}
	\begin{rem}
		The condition that $Q_{a,b}$ does not vanish on the unit disk in the previous result is a technical assumption and the result may be true without it. This condition was used exactly in the proof of $(2)$ implies $(3)$.
	\end{rem}
	\begin{cor}
		Let $-1<a,c<+\infty$, $d,b>-n$ and $1\leq p < + \infty$. If the following conditions are satisfied \\ $	c+1<p(a+1)$ and \[ \begin{cases} d+n<p(b+n) & \text{if $p > 1$,} \\
			d \leq b & \text{if $p=1$} \end{cases}\] then the projection $	\mathbb{P}_{a,b}$ is  bounded  from $L^{p}(\mathbb{B}_{n},d\mu_{c,d})$ onto  $\mathcal{A}^{p}_{c,d}(\mathbb{B}_{n})$. \\
		Conversely, if $Q_{a,b}$ has no zero in $\overline{\mathbb D}$ and  $\mathbb{P}_{a,b}$ is bounded on $L^{p}(\mathbb{B}_{n},d\mu_{c,d})$ then the conditions $(C)$ are satisfied.
	\end{cor}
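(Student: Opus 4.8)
The plan is to deduce both parts from Theorem~\ref{Theorem 2.5} after identifying $\mathbb P_{a,b}$ with the operator $T$ introduced there. Indeed, by \eqref{e1} one has $\mathbb P_{a,b}f(z)=\int_{\mathbb B_n}f(w)\mathcal K_{a,b}(\langle z,w\rangle)\,d\mu_{a,b}(w)=Tf(z)$, and from the second expression for $Tf$ recorded in Theorem~\ref{Theorem 2.5} we obtain the pointwise domination $|\mathbb P_{a,b}f(z)|\le \frac{\beta(c+1,d+n)}{\beta(a+1,b+n)}\,S(|f|)(z)$.

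For the direct implication, assume the conditions $(C)$. By the implication $(3)\Rightarrow(2)$ of Theorem~\ref{Theorem 2.5}, the operator $S$ is bounded on $L^p(\mathbb B_n,d\mu_{c,d})$; I would stress that this particular implication does not require the non-vanishing of $Q_{a,b}$, because in its proof $Q_{a,b}$ enters only through the bound $|Q_{a,b}|\le\sup_{\overline{\mathbb D}}|Q_{a,b}|<+\infty$, and this supremum is automatically finite since the series defining ${}_2F_1(b,-(a+1);b+n;\cdot)$ converges absolutely on $\overline{\mathbb D}$ (here $(b+n)-b-(-(a+1))=a+n+1>0$). Together with the pointwise domination, this makes $\mathbb P_{a,b}$ a well-defined bounded operator on $L^p(\mathbb B_n,d\mu_{c,d})$. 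To see that its values lie in $\mathcal A^p_{c,d}(\mathbb B_n)$, take bounded functions $f_j\to f$ in $L^p(\mathbb B_n,d\mu_{c,d})$: each $\mathbb P_{a,b}f_j$ is holomorphic, since $\partial_{\overline z_k}$ passes inside the integral and annihilates $\mathcal K_{a,b}(\langle z,w\rangle)$ (the differentiation being legitimate because $|\langle z,w\rangle|\le|z|<1$ on $\mathbb B_n$, $\mathcal K_{a,b}$ is holomorphic on $\mathbb D$, and $\mu_{a,b}$ is finite), hence $\mathbb P_{a,b}f_j\in\mathcal A^p_{c,d}(\mathbb B_n)$; applying the pointwise growth estimate of Section~1 with the weights $(c,d)$ shows that $(\mathbb P_{a,b}f_j)$ is uniformly Cauchy on every $\overline{B}(0,r_0)$, so it converges locally uniformly to a holomorphic function that agrees a.e.\ with $\mathbb P_{a,b}f\in L^p(\mathbb B_n,d\mu_{c,d})$. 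Thus $\mathbb P_{a,b}\colon L^p(\mathbb B_n,d\mu_{c,d})\to\mathcal A^p_{c,d}(\mathbb B_n)$ is bounded.

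Surjectivity onto $\mathcal A^p_{c,d}(\mathbb B_n)$ follows because $\mathbb P_{a,b}$ fixes that space. By \eqref{e1} the restriction of $\mathbb P_{a,b}=T$ to $L^2(\mathbb B_n,d\mu_{a,b})$ is the orthogonal projection onto $\mathcal A^2_{a,b}(\mathbb B_n)$, so $\mathbb P_{a,b}\phi=\phi$ for every polynomial $\phi$ (polynomials being bounded on $\mathbb B_n$ and $\mu_{a,b}$ a probability measure); since polynomials are dense in $\mathcal A^p_{c,d}(\mathbb B_n)$ (approximate $f$ by the Taylor polynomials of its dilations $f(r\cdot)$, which extend holomorphically past $\overline{\mathbb B_n}$) and $\mathbb P_{a,b}$ is bounded on $L^p(\mathbb B_n,d\mu_{c,d})$, we conclude $\mathbb P_{a,b}f=f$ for all $f\in\mathcal A^p_{c,d}(\mathbb B_n)$; hence the range of $\mathbb P_{a,b}$ is exactly $\mathcal A^p_{c,d}(\mathbb B_n)$ (in particular $\mathbb P_{a,b}$ is a bounded projection). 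For the converse, suppose $Q_{a,b}$ has no zero in $\overline{\mathbb D}$ and $\mathbb P_{a,b}=T$ is bounded on $L^p(\mathbb B_n,d\mu_{c,d})$: this is assertion~$(1)$ of Theorem~\ref{Theorem 2.5}, and the equivalence $(1)\Leftrightarrow(3)$ there --- precisely the direction in which the non-vanishing hypothesis is used --- yields the conditions $(C)$.

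The step I expect to be the crux is the first one of the direct implication: isolating which parts of the proof of Theorem~\ref{Theorem 2.5} genuinely use the non-vanishing of $Q_{a,b}$, in order to obtain boundedness of $\mathbb P_{a,b}$ under the weaker hypotheses stated here. The holomorphy of the range, the density of polynomials, and the fact that $\mathbb P_{a,b}$ fixes them are routine.
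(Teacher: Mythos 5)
Your proof is correct and follows the same route as the paper, whose entire proof of this corollary is ``It's an immediate consequence of the previous Theorem'': you identify $\mathbb P_{a,b}$ with the operator $T$ of Theorem~\ref{Theorem 2.5} and read off both implications from the equivalences there. Your added details --- that the non-vanishing of $Q_{a,b}$ is only needed for $(2)\Rightarrow(3)$ (which the paper itself notes in the remark following the theorem), and the verification that the range is $\mathcal A^p_{c,d}(\mathbb B_n)$ via holomorphy and the fact that $\mathbb P_{a,b}$ fixes polynomials --- are accurate fillings-in of what the paper leaves implicit.
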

	\begin{proof}
		It's an immediate consequence of the previous Theorem.
	\end{proof}
	\begin{thm}
		For every $-1<a<+\infty$, $b>-n$ and $1 < p < + \infty$, the topological dual of $\mathcal{A}^{p}_{a,b}(\mathbb B_n)$ is the space $\mathcal{A}^{q}_{a,b}(\mathbb B_n)$ under the integral pairing \begin{equation*}
			\langle f,g \rangle_{a,b} = \int_{\mathbb{B}_{n}} f(z) \overline{g(z)} d\mu_{a,b}(z),\qquad\forall\; f \in \mathcal{A}^{p}_{a,b}(\mathbb B_n), \ \ g \in \mathcal{A}^{q}_{a,b}(\mathbb B_n),
		\end{equation*}
		where $\frac{1}{p}+\frac{1}{q}=1$.
	\end{thm}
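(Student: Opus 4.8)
The plan is to exhibit the natural map $g\mapsto\Lambda_g$, where $\Lambda_g(f)=\langle f,g\rangle_{a,b}$, as a topological isomorphism from $\mathcal A^q_{a,b}(\mathbb B_n)$ onto the dual of $\mathcal A^p_{a,b}(\mathbb B_n)$. The inequality $|\langle f,g\rangle_{a,b}|\le\|f\|_p\|g\|_q$ (H\"older) already shows that $\Lambda_g\in(\mathcal A^p_{a,b}(\mathbb B_n))^*$ with $\|\Lambda_g\|\le\|g\|_q$, so the map is well defined and bounded. It is injective: writing $g=\sum_\alpha\widehat g(\alpha)z^\alpha$ with locally uniform convergence, integrating $z^\alpha\overline g$ over the balls $\{|z|<\rho\}$, using that $d\mu_{a,b}$ is invariant under the torus action so that distinct monomials are orthogonal, and letting $\rho\to1^-$, one gets $\langle z^\alpha,g\rangle_{a,b}=\overline{\widehat g(\alpha)}\,\|z^\alpha\|_{a,b,2}^2$ with $\|z^\alpha\|_{a,b,2}^2>0$; hence $\Lambda_g=0$ forces $g\equiv0$.

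For surjectivity, fix $\Lambda\in(\mathcal A^p_{a,b}(\mathbb B_n))^*$. Since $\mathcal A^p_{a,b}(\mathbb B_n)$ is a closed subspace of $L^p(\mathbb B_n,d\mu_{a,b})$ by the sup estimate of the Proposition above, Hahn--Banach extends $\Lambda$ to $\widetilde\Lambda$ on $L^p(\mathbb B_n,d\mu_{a,b})$ with the same norm; as $1<p<\infty$ and $d\mu_{a,b}$ is a finite measure, $(L^p)^*=L^q$, so there is $h\in L^q(\mathbb B_n,d\mu_{a,b})$ with $\|h\|_q=\|\Lambda\|$ and $\Lambda(f)=\int_{\mathbb B_n}f\,\overline h\,d\mu_{a,b}$ for all $f\in\mathcal A^p_{a,b}(\mathbb B_n)$. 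Set $g:=\mathbb P_{a,b}h$. For the pair $(c,d)=(a,b)$ and the exponent $q>1$, the conditions $(C)$ of Theorem~\ref{Theorem 2.5} read $a+1<q(a+1)$ and $b+n<q(b+n)$, i.e.\ simply $q>1$; hence, by the corollary following Theorem~\ref{Theorem 2.5}, $\mathbb P_{a,b}$ is bounded on $L^q(\mathbb B_n,d\mu_{a,b})$, so $g\in\mathcal A^q_{a,b}(\mathbb B_n)$ with $\|g\|_q\le\|\mathbb P_{a,b}\|\,\|\Lambda\|$.

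It remains to identify $\Lambda$ with $\Lambda_g$. The dilates $f_r(z)=f(rz)$ converge to $f$ in $\mathcal A^p_{a,b}(\mathbb B_n)$ and each $f_r$ is a uniform limit on $\overline{\mathbb B_n}$ of its Taylor polynomials, so polynomials are dense in $\mathcal A^p_{a,b}(\mathbb B_n)$ and it suffices to check $\langle f,g\rangle_{a,b}=\Lambda(f)$ for $f$ a polynomial. Using $\overline{\mathcal K_{a,b}(\langle z,w\rangle)}=\mathcal K_{a,b}(\langle w,z\rangle)$ (the kernel has real Taylor coefficients),
\begin{equation*}
	\langle f,g\rangle_{a,b}=\int_{\mathbb B_n}f(z)\,\overline{\mathbb P_{a,b}h(z)}\,d\mu_{a,b}(z)=\int_{\mathbb B_n}f(z)\Big(\int_{\mathbb B_n}\overline{h(w)}\,\mathcal K_{a,b}(\langle w,z\rangle)\,d\mu_{a,b}(w)\Big)d\mu_{a,b}(z).
\end{equation*}
This double integral converges absolutely: $f$ is bounded, $Q_{a,b}$ extends continuously, hence boundedly, to $\overline{\mathbb D}$ (the parameter condition $(b+n)-b+(a+1)=a+n+1>0$ holds), so $|\mathcal K_{a,b}(\xi)|\lesssim|1-\xi|^{-(a+n+1)}$, and by the lemma on $I_s$ with $s=0$ one has $\int_{\mathbb B_n}|1-\langle w,z\rangle|^{-(a+n+1)}d\mu_{a,b}(z)\lesssim 1+\log\frac{1}{1-|w|^2}$, a function in $L^p(\mathbb B_n,d\mu_{a,b})$ that pairs with $|h|\in L^q$ by H\"older. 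Thus Fubini applies, and the inner integral over $z$ equals $\mathbb P_{a,b}f(w)=f(w)$ by the reproducing property for polynomials, giving $\langle f,g\rangle_{a,b}=\int_{\mathbb B_n}f\,\overline h\,d\mu_{a,b}=\Lambda(f)$. Together with the first paragraph this makes $g\mapsto\Lambda_g$ a linear bijection with $\|\Lambda_g\|\le\|g\|_q\le\|\mathbb P_{a,b}\|\,\|\Lambda_g\|$, hence an isomorphism $\mathcal A^q_{a,b}(\mathbb B_n)\cong(\mathcal A^p_{a,b}(\mathbb B_n))^*$. The main obstacle is precisely this last step: converting $\langle f,\mathbb P_{a,b}h\rangle_{a,b}$ into $\langle\mathbb P_{a,b}f,h\rangle_{a,b}$ when $h$ lies only in $L^q$ and not in $L^2$, where the $L^2$-self-adjointness of $\mathbb P_{a,b}$ is not directly available and must be recovered through the Fubini argument powered by the kernel size bound and the $s=0$ case of the $I_s$-lemma; a secondary point requiring care is the density of polynomials, which is handled by dilation.
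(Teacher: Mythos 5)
Your proof is correct and follows essentially the same route as the paper's: H\"older's inequality for boundedness of $g\mapsto\Lambda_g$, then Hahn--Banach plus $(L^p)^*=L^q$ to produce $h\in L^q(\mathbb{B}_n,d\mu_{a,b})$, and the boundedness of $\mathbb{P}_{a,b}$ on $L^q$ (conditions $(C)$ with $(c,d)=(a,b)$, which do not require the no-zero hypothesis on $Q_{a,b}$) to land $g$ in $\mathcal{A}^q_{a,b}(\mathbb B_n)$. The paper compresses your final Fubini/density step into the single chain $\langle f,h\rangle_{a,b}=\langle \mathbb{P}_{a,b}f,h\rangle_{a,b}=\langle f,\mathbb{P}_{a,b}^*h\rangle_{a,b}$ with $g=\mathbb{P}_{a,b}^*h$, so your argument is the same proof with the self-adjointness of the projection, the density of polynomials, and the injectivity of $g\mapsto\Lambda_g$ made explicit.
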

	\begin{proof}
		If $g \in \mathcal{A}^{q}_{a,b}(\mathbb B_n)$, then using H\"older's inequality, one can show that the linear form defined by $g$:  \begin{equation*}
			f\longmapsto\langle f,g \rangle_{a,b} = \int_{\mathbb{B}_{n}} f(z) \overline{g(z)} d\mu_{a,b}(z)
		\end{equation*}
		is bounded on $\mathcal{A}^{p}_{a,b}(\mathbb B_n)$. Conversely, if $F$ is a bounded linear form on $\mathcal{A}^{p}_{a,b}(\mathbb B_n)$, then by Hahn-Banach extension theorem, $F$ can be extended to a bounded linear functional on $L^{p}(\mathbb{B}_{n},d\mu_{a,b})$, (still denoted $F$), by duality, there exists $h\in L^{q}(\mathbb{B}_{n},d\mu_{a,b})$ such that \begin{equation*}
			F(f)= \int_{\mathbb{B}_{n}} f(z) \overline{h(z)} d\mu_{a,b}(z), \quad \forall\; f \in L^{p}(\mathbb{B}_{n},d\mu_{a,b}).
		\end{equation*}
		The previous corollary guarantees the continuity of the projection from $L^{p}(\mathbb{B}_{n},d\mu_{a,b})$ onto $\mathcal{A}^{p}_{a,b}(\mathbb B_n)$	which implies that for every $f \in \mathcal{A}^{p}_{a,b}(\mathbb B_n)$, \begin{equation*}
			F(f)= \langle f,h \rangle_{a,b} = \langle \mathbb{P}_{a,b} f , h \rangle_{a,b} = \langle f , \mathbb{P}_{a,b}^{*} h \rangle_{a,b}.
		\end{equation*}
		If we set $g=\mathbb{P}_{a,b}^{*} h$, then $g \in \mathcal{A}^{q}_{a,b}(\mathbb B_n) $ and $F(f)= \langle f,g \rangle_{a,b}$ for every $f \in \mathcal{A}^{p}_{a,b}(\mathbb B_n)$.
	\end{proof}
	\section{Berezin Transform}
	We introduce some notations which will be useful in the sequel of this section. For $z \in \mathbb{B}_{n}$ we consider the automorphism $\phi_{z}$ given by \begin{equation}
		\phi_{z}(\zeta)=\frac{z-P_{z}(\zeta)-(1-|z|^{2})^{1/2}Q_{z}(\zeta)}{1-\langle \zeta , z \rangle}, \qquad \zeta \in \mathbb{B}_{n}
	\end{equation}
	where $P_{z}(\zeta)=\frac{\langle \zeta,z \rangle}{|z|^{2}}z$ is the orthogonal projection onto the subspace generated by $z$ if $z\neq0$, and $Q_{z}(\zeta)=\zeta-P_{z}(\zeta)$. If $z=0$ we take $\phi_0(\zeta)=-\zeta$. The automorphism $\phi_z$ verifies the following properties:
	\begin{itemize}
		\item $\phi_{z} \circ \phi_{z}(\zeta)=\zeta$, for every $\zeta \in \mathbb{B}_{n}$.
		\item  The real Jacobian determinant $J_{\mathbb R}(\phi_{z})$ is given by \begin{equation*}
			J_{\mathbb R}(\phi_{z})(\zeta) = \left( \frac{1-|z|^{2}}{| 1- \langle \zeta , z \rangle |^{2}} \right)^{n+1}
		\end{equation*}
		\item and the following identity:
        \begin{equation}\label{eq2}
			1- |\phi_{z}(\zeta) |^{2}=\frac{(1-|z|^{2})(1-|\zeta |^{2})}{|1- \langle \zeta , z  \rangle |^{2}}.
		\end{equation}
	\end{itemize}
	\begin{defn}
		For every $a>-1$, $b>-n$ and $f$  a measurable function on the unit ball $\mathbb{B}_{n}$, the Berezin transform $\mathbf{B}_{a,b}$ of $f$ is given by \begin{equation*}
			\mathbf{B}_{a,b}f(z)= \langle f \kappa_z,  \kappa_z\rangle_{a,b}=  \int_{\mathbb{B}_{n}} f(w) \frac{|\mathcal{K}_{a,b}(\langle w,z\rangle)|^2}{\mathcal{K}_{a,b}(|z|^{2})} d\mu_{a,b}(w),
		\end{equation*}
		where  $\kappa_z$ is the normalized reproducing kernel of $\mathcal{A}^{2}_{a,b}(\mathbb{B}_{n})$:
        \begin{equation*}
	\kappa_{z}(\zeta)=\frac{|\mathbb{K}_{a,b}(\zeta,z)|^{2}}{\| \mathbb{K}_{a,b}(.,z) \|_{a,b,2}^{2} }=\frac{\mathcal{K}_{a,b}(\langle \zeta,z\rangle)}{\sqrt{\mathcal{K}_{a,b}(|z|^{2})}}.
\end{equation*}
	\end{defn}
	This is because \begin{equation*}
		\| \mathbb{K}_{a,b}(.,z) \|_{a,b,2}^{2} = \mathbb{K}_{a,b}(z,z)=\mathcal K_{a,b}(|z|^2)=\frac{Q_{a,b}(|z|^{2})}{(1-|z|^{2})^{a+n+1}}.
	\end{equation*}
	One can see that $\mathbf{B}_{a,b}$ is well defined on $L^{1}(\mathbb{B}_{n}, d\mu_{a,b}).$
	\begin{prop}
		The Berezin transform $\mathbf{B}_{a,b}$ is one-to-one on $L^{1}(\mathbb{B}_{n}, d\mu_{a,b})$.
	\end{prop}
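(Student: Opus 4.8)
The plan is to establish that $\mathbf B_{a,b}$ has trivial kernel: if $f\in L^1(\mathbb B_n,d\mu_{a,b})$ satisfies $\mathbf B_{a,b}f\equiv 0$ on $\mathbb B_n$, then $f=0$ in $L^1(\mathbb B_n,d\mu_{a,b})$; since $\mathbf B_{a,b}$ is linear, this gives that it is one-to-one. From the series expansion of the kernel obtained above, $\mathcal K_{a,b}(\xi)=\sum_{k\geq0}c_k\xi^k$ with $c_k=\dfrac{(n)_k\,(a+b+n+1)_k}{(b+n)_k\,k!}$, and every $c_k$ is strictly positive since $n\geq1$, $b+n>0$ and $a+b+n+1=(a+1)+(b+n)>0$; in particular $\mathcal K_{a,b}(|z|^{2})=\sum_{k\geq0}c_k|z|^{2k}\geq c_0=1>0$ for every $z\in\mathbb B_n$. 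Multiplying the identity $\mathbf B_{a,b}f(z)=0$ by $\mathcal K_{a,b}(|z|^{2})$, the hypothesis becomes equivalent to
\begin{equation*}
	\int_{\mathbb B_n}f(w)\,\bigl|\mathcal K_{a,b}(\langle w,z\rangle)\bigr|^{2}\,d\mu_{a,b}(w)=0,\qquad z\in\mathbb B_n .
\end{equation*}

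For the second step I would expand the kernel into a power series. Fix $z\in\mathbb B_n$. Since $|\langle w,z\rangle|\leq|z|<1$ for all $w\in\mathbb B_n$ and $\sum_k c_k t^k$ converges for $0\leq t<1$, the double series
\begin{equation*}
	\bigl|\mathcal K_{a,b}(\langle w,z\rangle)\bigr|^{2}=\mathcal K_{a,b}(\langle w,z\rangle)\,\mathcal K_{a,b}(\langle z,w\rangle)=\sum_{k,j\geq0}c_k c_j\,\langle w,z\rangle^{k}\,\langle z,w\rangle^{j}
\end{equation*}
converges absolutely and uniformly in $w$, being dominated by $\bigl(\sum_k c_k|z|^{k}\bigr)^{2}<+\infty$; as $f\in L^1(\mathbb B_n,d\mu_{a,b})$, this legitimizes integrating term by term. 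Using $\langle w,z\rangle^{k}=\sum_{|\alpha|=k}\tfrac{k!}{\alpha!}w^{\alpha}\overline{z}^{\alpha}$ and $\langle z,w\rangle^{j}=\sum_{|\beta|=j}\tfrac{j!}{\beta!}z^{\beta}\overline{w}^{\beta}$, the last identity becomes
\begin{equation*}
	\sum_{\alpha,\beta\in\mathbb N^{n}}c_{|\alpha|}\,c_{|\beta|}\,\frac{|\alpha|!\,|\beta|!}{\alpha!\,\beta!}\Bigl(\int_{\mathbb B_n}f(w)\,w^{\alpha}\,\overline{w}^{\beta}\,d\mu_{a,b}(w)\Bigr)z^{\beta}\,\overline{z}^{\alpha}=0,\qquad z\in\mathbb B_n,
\end{equation*}
an absolutely convergent power series in $(z,\overline z)$ that vanishes identically on the ball. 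By uniqueness of the Taylor coefficients of a real-analytic function --- equivalently, by substituting $z\mapsto\lambda z$, comparing the coefficients of $\lambda^{p}\overline\lambda^{q}$, and noting that each resulting genuine polynomial in $(z,\overline z)$ vanishes on the open ball, hence identically --- and since $c_{|\alpha|}c_{|\beta|}>0$, I conclude that
\begin{equation*}
	\int_{\mathbb B_n}f(w)\,w^{\alpha}\,\overline{w}^{\beta}\,d\mu_{a,b}(w)=0\qquad\text{for all }\alpha,\beta\in\mathbb N^{n}.
\end{equation*}

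Finally I would conclude by a density argument. The complex measure $\nu$ given by $d\nu=f\,d\mu_{a,b}$ is finite on $\mathbb B_n$ (because $\|f\|_{a,b,1}<+\infty$) and, assigned zero mass on $\mathbb S^{n}$, defines a finite complex Borel measure on the compact set $\overline{\mathbb B_n}$. The previous step says precisely that $\nu$ annihilates every polynomial in $z_1,\dots,z_n,\overline z_1,\dots,\overline z_n$; such polynomials form a subalgebra of $C(\overline{\mathbb B_n};\mathbb C)$ that contains the constants, separates points, and is stable under complex conjugation, hence is dense in $C(\overline{\mathbb B_n};\mathbb C)$ by the Stone--Weierstrass theorem. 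Therefore $\int\varphi\,d\nu=0$ for every $\varphi\in C(\overline{\mathbb B_n})$, and the uniqueness part of the Riesz representation theorem forces $\nu=0$, i.e. $f=0$ $\mu_{a,b}$-almost everywhere, so $f=0$ in $L^1(\mathbb B_n,d\mu_{a,b})$, as desired. The only point needing genuine care is the middle step --- passing from the single scalar identity valid for all $z$ to the vanishing of every moment $\int f\,w^{\alpha}\overline{w}^{\beta}\,d\mu_{a,b}$ --- which combines the term-by-term integration (justified by the uniform bound above) with the uniqueness of power-series coefficients; the opening reduction and the closing Stone--Weierstrass step are routine.
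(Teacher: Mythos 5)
Your proof is correct and follows essentially the same route as the paper: multiply $\mathbf B_{a,b}f=0$ by $\mathcal K_{a,b}(|z|^{2})$, expand $\mathcal K_{a,b}(\langle w,z\rangle)\mathcal K_{a,b}(\langle z,w\rangle)$ as a double power series with strictly positive coefficients, deduce that all moments $\int_{\mathbb B_n} f(w)\,w^{\alpha}\overline{w}^{\beta}\,d\mu_{a,b}(w)$ vanish (the paper reads these off by differentiating $H$ at the origin, which is the same Taylor-coefficient extraction you perform), and conclude by density of the polynomials in $z,\overline z$. Your closing Stone--Weierstrass/Riesz argument is in fact a more careful justification of the paper's terse appeal to ``density of polynomials in $L^{1}$''.
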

	\begin{proof}
		Let $f\in Ker(\mathbf{B}_{a,b})$, i.e. $f \in L^{1}(\mathbb{B}_{n},d\mu_{a,b})$ with  $\mathbf{B}_{a,b}f=0$. If we set $$
		H(z)=\mathcal{K}_{a,b}(|z|^{2})  \mathbf{B}_{a,b}f(z) = \int_{\mathbf{B}_{n}} f(w) \mathcal{K}_{a,b}( \langle w,z \rangle) \mathcal{K}_{a,b}(\langle z,w\rangle ) d\mu_{a,b}(w).$$
		then $H\equiv 0$ on $\mathbb{B}_{n}$ gives
		$$\frac{\partial ^{|\alpha |+|\gamma|}H}{\partial z^{\alpha}\partial\overline{z}^{\gamma}}(0)= \frac{(n)_{|\alpha |}(a+b+n+1)_{|\alpha |}}{(b+n)_{|\alpha |}} \frac{(n)_{|\gamma |}(a+b+n+1)_{|\gamma |}}{(b+n)_{|\gamma |}} \int_{\mathbb{B}_{n}} f(w) \overline{w^{\alpha}} w^{\gamma} d\mu_{a,b}(w)=0.$$
		It follows that$$\int_{\mathbb{B}_{n}} f(w) \overline{w^{\alpha}} w^{\gamma} d\mu_{a,b}(w)=0$$
		for every $ \alpha =(\alpha_{1},...,\alpha_{n}),\ \gamma=(\gamma_{1},...,\gamma_{n})\in\mathbb N^n$.
		The density of polynomials in $L^{1}(\mathbb{B}_{n},d\mu_{a,b})$ gives that $f=0$ and the proposition is proved.
	\end{proof}
	\begin{thm}
		Let $a,c>-1$ and $d,b>-n$. The Berezin transform $\mathbf{B}_{a,b}: L^{p}(\mathbb{B}_{n},d\mu_{c,d}) \longrightarrow L^{p}(\mathbb{B}_{n},d\mu_{c,d})  $ is bounded if and only if \\
		$$	c+1<p(a+1) \qquad and  \qquad \begin{cases} d+n<p(b+n) & \text{if $p > 1$,} \\
			d \leq b & \text{if $p=1$} \end{cases} $$
	\end{thm}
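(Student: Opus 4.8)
The plan is to rewrite $\mathbf{B}_{a,b}$ as an explicit integral operator of the same type as the operators $S,T$ of Theorem~\ref{Theorem 2.5}, and then prove the two implications separately --- Schur's test for sufficiency, and a test-function argument on the adjoint for necessity. Using $\mathcal{K}_{a,b}(\xi)=Q_{a,b}(\xi)(1-\xi)^{-(a+n+1)}$ one rewrites, for $z,w\in\mathbb{B}_{n}$ (so $\langle w,z\rangle\in\mathbb{D}$),
\[
\mathbf{B}_{a,b}f(z)=\frac{(1-|z|^{2})^{a+n+1}}{Q_{a,b}(|z|^{2})}\int_{\mathbb{B}_{n}}f(w)\,\frac{|Q_{a,b}(\langle w,z\rangle)|^{2}}{|1-\langle w,z\rangle|^{2(a+n+1)}}\,d\mu_{a,b}(w).
\]
Two elementary facts about $Q_{a,b}$ make the non-vanishing hypothesis of Theorem~\ref{Theorem 2.5} unnecessary here. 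First, since ${}_2F_1(b,-(a+1);b+n;\cdot)$ converges on $\overline{\mathbb{D}}$ (the relevant quantity being $n+a+1>0$), $Q_{a,b}$ is continuous on $\overline{\mathbb{D}}$, hence $|Q_{a,b}|\le M_{0}$ there. Second, $Q_{a,b}(|z|^{2})=(1-|z|^{2})^{a+n+1}\|\mathbb{K}_{a,b}(\cdot,z)\|_{a,b,2}^{2}>0$ together with $Q_{a,b}(1)=\frac{\Gamma(b+n)\Gamma(n+a+1)}{\Gamma(n)\Gamma(a+b+n+1)}>0$ and continuity gives $Q_{a,b}\ge m_{0}>0$ on $[0,1]$, i.e. $\mathcal{K}_{a,b}(|z|^{2})^{-1}\approx(1-|z|^{2})^{a+n+1}$. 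Writing $d\mu_{a,b}$ in terms of $d\mu_{c,d}$ and bounding $|Q_{a,b}|^{2}$ above, these facts reduce $\mathbf{B}_{a,b}$ to the majorant
\[
|\mathbf{B}_{a,b}f(z)|\lesssim (1-|z|^{2})^{a+n+1}\int_{\mathbb{B}_{n}}\frac{|f(w)|\,(1-|w|^{2})^{a-c}|w|^{2b-2d}}{|1-\langle z,w\rangle|^{2(a+n+1)}}\,d\mu_{c,d}(w).
\]

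For the sufficiency, when $1<p<+\infty$ I would apply Schur's test to this majorant with the test function $h(z)=(1-|z|^{2})^{-\sigma}|z|^{-\tau}$, evaluating the two Schur integrals with the asymptotics of $I_{s}$ (now with the doubled exponent $2(a+n+1)$): the first inequality requires $-\frac{a+n+1}{q}<\sigma<\frac{a+1}{q}$, $0\le\tau<\frac{2(b+n)}{q}$, the second requires $\frac{c-a}{p}<\sigma<\frac{a+n+c+2}{p}$, $\frac{2(d-b)}{p}\le\tau<\frac{2(d+n)}{p}$, and (using $\frac pq=p-1$ and $b>-n$) admissible $\sigma,\tau$ exist precisely when $c+1<p(a+1)$ and $d+n<p(b+n)$; this arithmetic equivalence is the heart of the matter. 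For $p=1$, Fubini gives $\|\mathbf{B}_{a,b}f\|_{c,d,1}\le\int|f(w)|\big(\int k(z,w)\,d\mu_{c,d}(z)\big)\,d\mu_{a,b}(w)$, where $k(z,w)=|\mathcal{K}_{a,b}(\langle w,z\rangle)|^{2}/\mathcal{K}_{a,b}(|z|^{2})$; the inner integral is $\lesssim(1-|w|^{2})^{c-a}$ by the $I_{s}$-asymptotics when $a-c>0$, and then $d\le b$ absorbs the residual factor $|w|^{2b-2d}$.

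For the necessity, I would pass to the adjoint $\mathbf{B}_{a,b}^{*}g(w)=\frac{d\mu_{a,b}}{d\mu_{c,d}}(w)\int_{\mathbb{B}_{n}}g(z)\,k(z,w)\,d\mu_{c,d}(z)$ (taken with respect to $\langle\cdot,\cdot\rangle_{c,d}$), and test it on $g_{N}(z)=(1-|z|^{2})^{N}$ for $N$ large (resp.\ $g\equiv1$ when $p=1$). Restricting the $z$-integral to a small ball $\{|z|\le\varepsilon_{0}\}$, on which $Q_{a,b}(\langle w,z\rangle)$ is close to $Q_{a,b}(0)=1$ and the remaining factors are comparable to constants, gives the lower bound $\mathbf{B}_{a,b}^{*}g_{N}(w)\gtrsim(1-|w|^{2})^{a-c}|w|^{2b-2d}$; since boundedness of $\mathbf{B}_{a,b}$ on $L^{p}(d\mu_{c,d})$ is equivalent to boundedness of $\mathbf{B}_{a,b}^{*}$ on $L^{q}(d\mu_{c,d})$ (resp.\ on $L^{\infty}$), the requirement $\mathbf{B}_{a,b}^{*}g_{N}\in L^{q}(d\mu_{c,d})$ forces $q(a-c)+c>-1$ (boundary behaviour, equivalent to $c+1<p(a+1)$) and $q(2b-2d)+2d>-2n$ (behaviour at the origin, equivalent to $d+n<p(b+n)$), while for $p=1$ finiteness of $\sup_{w}\mathbf{B}_{a,b}^{*}1(w)$ forces $a-c\ge0$ near the boundary and $b-d\ge0$ near the origin. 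The step I expect to be the main obstacle is the $p=1$ borderline $c=a$: the crude estimate only gives $\gtrsim|w|^{2b-2d}$, and one must instead produce $\mathbf{B}_{a,b}^{*}1(w)\gtrsim\log\frac{1}{1-|w|^{2}}$, which I would do by localizing the $z$-integral to a non-isotropic tube around the real segment $[0,|w|^{2}]$ --- on which $\langle w,z\rangle$ stays in a fixed complex neighbourhood of $[0,1]$, where $Q_{a,b}$ is bounded below by the positivity established above --- and then invoking the $s=0$ case of the $I_{s}$-asymptotics; this is also the only place where one genuinely uses anything about $Q_{a,b}$ beyond its boundedness. The rest is bookkeeping with the $I_{s}$-estimate and the two elementary bounds on $Q_{a,b}$.
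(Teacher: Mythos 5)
Your proposal is correct and follows the same route the paper intends: the paper's entire proof of this theorem is the single sentence that it is ``a direct consequence of Theorem \ref{Theorem 2.5} without the condition on the term $Q_{a,b}$,'' and what you have written is, in effect, the justification that this sentence omits. The necessity half of Theorem \ref{Theorem 2.5} genuinely uses the non-vanishing of $Q_{a,b}$ on $\overline{\mathbb D}$ (the paper's own remark says so), so dropping that hypothesis here requires exactly the two observations you make: the Berezin kernel only ever divides by $Q_{a,b}$ at the real point $|z|^{2}$, where positivity is automatic (since $Q_{a,b}(x)=(1-x)^{a+n+1}\|\mathbb K_{a,b}(\cdot,z)\|_{a,b,2}^{2}>0$ for $x=|z|^{2}<1$ and $Q_{a,b}(1)>0$), and otherwise only needs $|Q_{a,b}(\langle w,z\rangle)|$ bounded above on $\overline{\mathbb D}$ --- except in the borderline necessity case $p=1$, $c=a$, where a lower bound on $|Q_{a,b}|$ in a neighbourhood of the segment $[0,1]$ is also required. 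Your identification of that borderline case as the only point where more than boundedness of $Q_{a,b}$ enters is accurate, and your localization argument for it is sound in outline; the Schur-test bookkeeping you record for $1<p<\infty$ (with the doubled exponent $2(a+n+1)$ and the weight $(1-|z|^{2})^{a+n+1}/Q_{a,b}(|z|^{2})$ in front) checks out and reproduces exactly the conditions $(C)$.
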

	\begin{proof}
		This is a direct consequence of Theorem \ref{Theorem 2.5} without the condition on the term $Q_{a,b}$.
	\end{proof}
	If we denote by $\mathcal{C}({\overline{\mathbb{B}}_{n}})$ the space of continuous functions on $\overline{\mathbb{B}}_{n}$ and $\mathcal{C}_{0}({\overline{\mathbb{B}}_{n}})$ the subspace of continuous functions on $\overline{\mathbb{B}}_{n}$ that vanish at the boundary of $\mathbb{B}_{n}$ then we have the following proposition:
	\begin{prop}
		If $f \in \mathcal{C}({\overline{\mathbb{B}}_{n}}) $ then  $\mathbf{B}_{a,b} f \in \mathcal{C}({\overline{\mathbb{B}}_{n}}) $ and $f-\mathbf{B}_{a,b}f \in \mathcal{C}_{0}(\overline{\mathbb{B}}_{n})$.
	\end{prop}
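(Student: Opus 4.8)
The plan is to prove the two assertions separately, relying on the change of variables $w=\phi_z(\zeta)$ together with the invariance properties of the kernel. First I would record the basic transformation rule for the Berezin integrand. Using \eqref{eq2} and the explicit formula $\mathcal K_{a,b}(\xi)=Q_{a,b}(\xi)/(1-\xi)^{a+n+1}$, one computes that
\begin{equation*}
\frac{|\mathcal K_{a,b}(\langle w,z\rangle)|^2}{\mathcal K_{a,b}(|z|^2)}\,d\mu_{a,b}(w)
=\frac{(n-1)!}{\pi^n\beta(a+1,b+n)}\,\frac{|Q_{a,b}(\langle w,z\rangle)|^2}{Q_{a,b}(|z|^2)}\,|w|^{2b}\,
\frac{(1-|z|^2)^{a+n+1}(1-|w|^2)^a}{|1-\langle w,z\rangle|^{2(a+n+1)}}\,d\lambda_n(w),
\end{equation*}
and the substitution $w=\phi_z(\zeta)$, whose real Jacobian is $\big((1-|z|^2)/|1-\langle\zeta,z\rangle|^2\big)^{n+1}$, turns the $(1-|z|^2)$, $(1-|w|^2)$ and $|1-\langle w,z\rangle|$ factors into an expression in $\zeta$; the only genuinely $z$-dependent pieces left are $|w|^{2b}$, $Q_{a,b}(|z|^2)^{-1}$ and $|Q_{a,b}(\langle w,z\rangle)|^2$, all of which extend continuously to $\overline{\mathbb B}_n$ as long as $Q_{a,b}$ has no zero in $\overline{\mathbb D}$ — but since the theorem just above shows $\mathbf B_{a,b}$ is well-defined and bounded on the relevant spaces without that hypothesis, I would instead argue directly, keeping $w$ as the variable and dominating the integrand.

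For the first claim, $f\in\mathcal C(\overline{\mathbb B}_n)\Rightarrow\mathbf B_{a,b}f\in\mathcal C(\overline{\mathbb B}_n)$, I would write $\mathbf B_{a,b}f(z)=\int_{\mathbb B_n}f(\phi_z(\zeta))\,d\mu_{a,b}(\zeta)$ after the change of variables (the measure $d\mu_{a,b}$ is, up to the kernel factors, exactly what is produced — this is the standard ``Berezin transform is an average against a probability measure'' identity, which I would verify using \eqref{eq2} and $J_{\mathbb R}(\phi_z)$; note $\int_{\mathbb B_n}\,|\mathcal K_{a,b}(\langle w,z\rangle)|^2/\mathcal K_{a,b}(|z|^2)\,d\mu_{a,b}(w)=1$ by the reproducing property, so the total mass is $1$). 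Then for $z_k\to z_0\in\overline{\mathbb B}_n$ I would use that $\phi_{z_k}(\zeta)\to\phi_{z_0}(\zeta)$ for a.e.\ $\zeta$ (pointwise, from the explicit formula, with the understanding that when $|z_0|=1$ the maps $\phi_{z_k}$ push mass toward the boundary), that $f$ is bounded, and apply dominated convergence; the continuity of $f$ on the \emph{closed} ball is what lets the argument go through at boundary points $z_0$, since $f(\phi_{z_k}(\zeta))\to f(\phi_{z_0}(\zeta))$ makes sense even when the limit point lies on $\mathbb S^n$.

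For the second claim, $f-\mathbf B_{a,b}f\in\mathcal C_0(\overline{\mathbb B}_n)$, continuity is already established, so it remains to show the difference vanishes on $\mathbb S^n$. Fix $\xi_0\in\mathbb S^n$; then $f-\mathbf B_{a,b}f$ evaluated at $z$ equals $\int_{\mathbb B_n}\big(f(z)-f(\phi_z(\zeta))\big)\,d\mu_{a,b}(\zeta)$ (using that the averaging measure has mass $1$). The key geometric fact is that as $z\to\xi_0$, the measures $(\phi_z)_*\mu_{a,b}$ concentrate near $\xi_0$: for any fixed $\delta>0$, $\mu_{a,b}(\{\zeta:|\phi_z(\zeta)-\xi_0|\ge\delta\})\to0$. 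Granting this, split the integral into $\{|\phi_z(\zeta)-z|<\delta'\}$ and its complement; on the first set $|f(z)-f(\phi_z(\zeta))|$ is small by uniform continuity of $f$ on $\overline{\mathbb B}_n$, and the second set has small measure while $f(z)-f(\phi_z(\zeta))$ stays bounded by $2\|f\|_\infty$. I expect the main obstacle to be making the concentration statement precise and uniform: one needs the estimate that $1-|\phi_z(\zeta)|^2=(1-|z|^2)(1-|\zeta|^2)/|1-\langle\zeta,z\rangle|^2\to0$ forces $\phi_z(\zeta)$ near $\mathbb S^n$, combined with the fact that the ``angular'' part of $\phi_z(\zeta)$ clusters around $z/|z|\to\xi_0$; quantitatively this follows from $|\phi_z(\zeta)-\xi_0|^2\le 2(1-\mathrm{Re}\langle\phi_z(\zeta),\xi_0\rangle)$ and bounding $\langle\phi_z(\zeta),\xi_0\rangle$ below using the explicit form of $\phi_z$. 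Once the mass-concentration lemma is in hand, the rest is the routine $\varepsilon$-splitting above.
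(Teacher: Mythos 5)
Your overall route (transport the integral by $w=\phi_z(\zeta)$, use $\phi_z(\zeta)\to\xi$ pointwise as $z\to\xi$, and finish by dominated convergence / an $\varepsilon$-splitting) is the same as the paper's, and your first displayed identity for the transported integrand is correct. However, there is one genuine error at the heart of the write-up: the identity $\mathbf B_{a,b}f(z)=\int_{\mathbb B_n}f(\phi_z(\zeta))\,d\mu_{a,b}(\zeta)$, which you invoke as ``the standard averaging identity,'' is \emph{false} here for $b\neq 0$. After the substitution the correct formula, which the paper computes, is
\begin{equation*}
\mathbf B_{a,b}f(z)=\frac{C}{Q_{a,b}(|z|^{2})}\int_{\mathbb B_n}f(\phi_z(\zeta))\,\big|Q_{a,b}(\langle\phi_z(\zeta),z\rangle)\big|^{2}\,(1-|\zeta|^{2})^{a}\,|\phi_z(\zeta)|^{2b}\,d\lambda_n(\zeta),
\end{equation*}
and the factors $|Q_{a,b}(\langle\phi_z(\zeta),z\rangle)|^{2}$ and $|\phi_z(\zeta)|^{2b}$ (note: \emph{not} $|\zeta|^{2b}$) do not cancel and remain $z$-dependent. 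The M\"obius-invariance that makes the averaging identity true in the classical case is lost precisely because the weight $|w|^{2b}$ is not $\phi_z$-invariant and because of the extra hypergeometric factor $Q_{a,b}$. Consequently your formula $f(z)-\mathbf B_{a,b}f(z)=\int\big(f(z)-f(\phi_z(\zeta))\big)\,d\mu_{a,b}(\zeta)$ is not valid as written, and your boundary computation would produce the limit $f(\xi)\cdot\mu_{a,b}(\mathbb B_n)=f(\xi)$ for the wrong reason; in the paper the limit of the transported integral is $C\,Q_{a,b}(1)\,f(\xi)\int(1-|\zeta|^2)^a d\lambda_n(\zeta)$, and one must check via $Q_{a,b}(1)=\frac{\Gamma(b+n)\Gamma(a+n+1)}{\Gamma(n)\Gamma(a+b+n+1)}$ and $\beta(a+1,n)/\beta(a+1,b+n)$ that this constant is exactly $1$.

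The good news is that your scaffolding is easily repaired without the false identity. The measure $d\nu_z(w)=\frac{|\mathcal K_{a,b}(\langle w,z\rangle)|^{2}}{\mathcal K_{a,b}(|z|^{2})}\,d\mu_{a,b}(w)$ \emph{is} a probability measure for every $z$ (by the reproducing property, as you note), so $f(z)-\mathbf B_{a,b}f(z)=\int_{\mathbb B_n}\big(f(z)-f(w)\big)\,d\nu_z(w)$ holds with no change of variables at all, and your $\varepsilon$-splitting goes through once you prove the concentration statement $\nu_z(\{w:|w-\xi_0|\ge\delta\})\to 0$ as $z\to\xi_0$. For that you do not need the somewhat delicate angular analysis of $\phi_z$ you sketch: on the set $|w-\xi_0|\ge\delta$, $|z-\xi_0|\le\delta/2$, the elementary inequality $|1-\langle w,z\rangle|\ge\frac12|w-z|^{2}\ge\delta^{2}/8$ bounds $|\mathcal K_{a,b}(\langle w,z\rangle)|^{2}$ by a constant depending only on $\delta$, while $\mathcal K_{a,b}(|z|^{2})^{-1}=(1-|z|^{2})^{a+n+1}/Q_{a,b}(|z|^{2})\to 0$; this gives concentration directly and also handles the integrability issue that dominated convergence on the transported integral raises when $b<0$ (the factor $|\phi_z(\zeta)|^{2b}$ is then unbounded near $\zeta=z$, so a single dominating function is not immediate). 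With that lemma, both assertions of the proposition follow exactly as you outline.
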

	\begin{proof}
		By making the change of variable $w=\phi_{z}(\zeta)$, we obtain
		$$\begin{array}{l}
			\mathbf{B}_{a,b}f(z)=\ds \frac{1}{\mathcal{K}_{a,b}(|z|^{2})} \int_{\mathbb{B}_{n}} f(w) | \mathcal{K}_{a,b}( \langle w,z \rangle ) |^{2} d\mu_{a,b}(w)\\
			=\ds \frac{ (1-|z|^{2})^{a+n+1}}{Q_{a,b}(|z|^{2})} \int_{\mathbb{B}_{n}} f(w) \frac{|Q_{a,b}( \langle w,z \rangle  )|^{2}}{|1-\langle w,z \rangle |^{2(a+n+1)}} d\mu_{a,b}(w) \\
			=\ds  C \frac{ (1-|z|^{2})^{a+n+1}}{Q_{a,b}(|z|^{2})} \int_{\mathbb{B}_{n}}  \frac{f(\phi_{z}(\zeta))|Q_{a,b}( \langle \phi_{z}(\zeta),z \rangle  )|^{2}}{|1-\langle \phi_{z}(\zeta),z \rangle |^{2(a+n+1)}} (1-| \phi_{z}(\zeta)|^{2})^{a} |\phi_{z}(\zeta) |^{2b} \bigg ( \frac{1-|z|^{2}}{| 1- \langle \zeta , z \rangle |^{2}} \bigg )^{n+1} d\lambda_{n}(\zeta)		
		\end{array}$$
		where $C=\frac{(n-1)!}{\pi^{n} \beta(a+1,b+n)}$. Hence, using Formula \eqref{eq2} and that
		$$|1-\langle \phi_{z}(\zeta),z \rangle |^{2(a+n+1)}= \left(\frac{1-|z|^{2}}{|1- \langle \zeta ,z \rangle|} \right)^{2(a+n+1)},$$
		we obtain
		$$\mathbf{B}_{a,b}f(z)= \frac{C}{Q_{a,b}(|z|^{2})} \int_{\mathbb{B}_{n}} f(\phi_{z}(\zeta)) |Q_{a,b}( \langle \phi_{z}(\zeta),z \rangle  )|^{2} (1-|\zeta |^{2})^{a} | \phi_{z}(\zeta)|^{2b} d\lambda_{n}(\zeta).$$
		Since for every $ \xi \in \mathbb{S}_{n}=\partial\mathbb B_n$, we have $\phi_{z}(\zeta) \to \xi $ as $z \to \xi$, for every $\zeta \in \mathbb{B}_{n}$, then  \begin{equation*}
			\lim_{z \to \xi}\mathbf{B}_{a,b}f(z)= C Q_{a,b}(1) f(\xi) \int_{\mathbb{B}_{n}} (1-|\zeta|^{2})^{a} d\lambda_{n}(\zeta)
		\end{equation*}
		It is easy to see that \begin{equation*}
			\int_{\mathbb{B}_{n}} (1-|\zeta|^{2})^{a} d\lambda_{n}(\zeta)= \frac{\pi ^{n}}{(n-1)!}\beta(a+1,n).
		\end{equation*}
		Using a formula on page 40 in \cite{Ma-Ob-So}, one can see that $$Q_{a,b}(1)=\frac{\Gamma(b+n)\Gamma(a+n+1)}{\Gamma(n)\Gamma(b+a+n+1)}.$$ It follows that \begin{equation*}
			\lim_{z \to \xi}\mathbf{B}_{a,b}f(z)= f(\xi).
		\end{equation*}
	\end{proof}
\section{BMO in Bergman-Poincar\'e metric}
To start, if we set $\psi_{a,b}(z)=\log\mathcal{K}_{a,b}(|z|^{2}) $, then it is easy to see that $\psi_{a,b}$ is plurisubharmonic and smooth  on $\mathbb{B}_{n}$. Hence its Hessian matrix
\begin{equation*}
A(z)= \begin{pmatrix} \ds\frac{\partial^2\psi_{a,b}}{\partial z_1\partial \overline{z}_1 }(z)   & \dots &  \ds\frac{\partial^2\psi_{a,b}}{\partial z_1\partial \overline{z}_n }(z)\\
	\vdots & \ddots & \vdots \\ \ds\frac{\partial^2\psi_{a,b}}{\partial z_n\partial \overline{z}_1 }(z)  & \dots &\ds\frac{\partial^2\psi_{a,b}}{\partial z_n\partial \overline{z}_n }(z)   \end{pmatrix}.	
\end{equation*}
is positive and Hermitian. In particular, we have
$$\langle A(z).\xi,\xi\rangle=\sum_{j,k=1}^n\ds\frac{\partial^2\psi_{a,b}}{\partial z_j\partial \overline{z}_k }(z)\xi_j\overline{\xi}_k\geq 0\,\quad \forall\; \xi\in\mathbb C^n.$$
A simple computation shows that
\begin{equation*}
	\ds\frac{\partial^2\psi_{a,b}}{\partial z_j\partial \overline{z}_k }(z)= \left[\frac{\mathcal{K''}_{a,b}(|z|^{2})}{\mathcal{K}_{a,b}(|z|^{2})} - \left(\frac{\mathcal{K'}_{a,b}(|z|^{2})}{\mathcal{K}_{a,b}(|z|^{2})} \right)^{2}\right]\overline{z}_jz_k + \frac{\mathcal{K'}_{a,b}(|z|^{2})}{\mathcal{K}_{a,b}(|z|^{2})} \delta_{j,k}.
\end{equation*}
So that
\begin{equation}\label{q4.1}
    \langle  A(z).\xi,\xi\rangle= \left[\frac{\mathcal{K}''_{a,b}(|z|^{2})}{\mathcal{K}_{a,b}(|z|^{2})} - \left(\frac{\mathcal{K}'_{a,b}(|z|^{2})}{\mathcal{K}_{a,b}(|z|^{2})} \right)^{2}\right] |\langle z,\xi\rangle|^2 +\frac{\mathcal{K'}_{a,b}(|z|^{2})}{\mathcal{K}_{a,b}(|z|^{2})}|\xi|^2,\quad \forall\; (z,\xi)\in\mathbb
 B_n\times \mathbb C^n.
\end{equation}
It follows that we can define a metric on $\mathbb B_n$ (called Bergman-Poincar\'e metric) as follows; for any two points $z$ and $w$ in $\mathbb{B}_{n}$, let $\mathbf{d}_{a,b}(z,w)$ be the infimum of lengths $\ell_{a,b}(\gamma)$ of piecewise smooth curves  $\gamma: [0,1] \to \mathbb{B}_{n}$ with $\gamma(0)=z$ and $\gamma(1)=w$, where
\begin{equation*}
	\ell_{a,b}(\gamma)= \int_{0}^{1} \langle  A(\gamma(t)) \gamma'(t), \gamma'(t)\rangle^{1/2} dt.
\end{equation*}
The space $(\mathbb B_n,\mathbf{d}_{a,b})$ is a complete metric space.\\

Now, we can introduce BMO functions.
\begin{defn}
    For every $f\in L^2(\mathbb{B}_{n}, d\mu_{a,b})$,  the mean oscillation of $f$ at $z\in\mathbb B_n$ is \begin{equation*}
	MO(f)(z)=\left(\mathbf{B}_{a,b}(|f|^{2})(z)-|\mathbf{B}_{a,b}f(z)|^{2}\right)^{\frac{1}{2}}
\end{equation*}
We say that $f\in BMO$ if
$$\|f\|_{BMO}:=\sup_{z\in\mathbb B_n}MO(f)(z)<+\infty.$$
\end{defn}
The square root in the previous definition has a sense since \begin{equation*}
	(MO(f)(z))^{2}=\frac{1}{2 \mathcal{K}_{a,b}^{2}(|z|^{2})} \int_{\mathbb{B}_{n}}\int_{\mathbb{B}_{n}} |f(u)-f(v)|^{2} |\mathcal{K}_{a,b}( \langle z,u \rangle) |^{2} |\mathcal{K}_{a,b}( \langle z,v \rangle) |^{2} d\mu_{a,b}(u) d\mu_{a,b}(v).
\end{equation*}
Our aim here is to extend a result due to B\'ekoll\'e et al. in \cite{BBCZ} to our setting. Namely, we will prove the following theorem:
\begin{thm}\label{th2}
    For every $f\in BMO$, we have
    $$|\mathbb B_{a,b}f(z)-\mathbb B_{a,b}f(w)|\leq 2\|f\|_{BMO}\mathbf{d}_{a,b}(z,w)
    $$
    for all $z,w\in\mathbb B_n$.
\end{thm}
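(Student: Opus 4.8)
The plan is to reduce the global estimate to an infinitesimal bound on the gradient of $g:=\mathbf B_{a,b}f$ measured in the Hermitian form $A(z)$, and then to integrate along curves, using that $\mathbf d_{a,b}$ is by construction the length metric attached to $A$. First I would reduce to real‑valued $f$. Given $z,w\in\mathbb B_n$, pick $\theta\in\mathbb R$ with $e^{i\theta}\big(\mathbf B_{a,b}f(z)-\mathbf B_{a,b}f(w)\big)=\big|\mathbf B_{a,b}f(z)-\mathbf B_{a,b}f(w)\big|$ and set $h=\mathrm{Re}\,(e^{i\theta}f)$. Since the Berezin kernel $|\mathcal K_{a,b}(\langle w,z\rangle)|^2/\mathcal K_{a,b}(|z|^2)$ is real and positive, $\mathbf B_{a,b}\overline f=\overline{\mathbf B_{a,b}f}$, so $\mathbf B_{a,b}h=\mathrm{Re}\,(e^{i\theta}\mathbf B_{a,b}f)$ and therefore $\mathbf B_{a,b}h(z)-\mathbf B_{a,b}h(w)=|\mathbf B_{a,b}f(z)-\mathbf B_{a,b}f(w)|$; moreover $|h(u)-h(v)|\le|f(u)-f(v)|$ inserted into the double‑integral formula for $MO$ yields $\|h\|_{BMO}\le\|f\|_{BMO}$. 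Hence it is enough to prove the theorem for real $f$, for which $g=\mathbf B_{a,b}f$ is real‑valued.

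The heart of the matter is the pointwise estimate
\[
\Big|\sum_{j=1}^n\xi_j\,\partial_{z_j}g(z)\Big|\ \le\ MO(f)(z)\,\langle A(z)\xi,\xi\rangle^{1/2}\ \le\ \|f\|_{BMO}\,\langle A(z)\xi,\xi\rangle^{1/2}\qquad(z\in\mathbb B_n,\ \xi\in\mathbb C^n).
\]
Put $k_z(\cdot)=\mathcal K_{a,b}(\langle\cdot,z\rangle)=\mathbb K_{a,b}(\cdot,z)$, so that $K(z):=\|k_z\|_{a,b,2}^2=\mathcal K_{a,b}(|z|^2)$ and $g(z)=K(z)^{-1}\int_{\mathbb B_n}f(w)\,k_z(w)\,\overline{k_z(w)}\,d\mu_{a,b}(w)$. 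For fixed $z\in\mathbb B_n$ the functions $w\mapsto k_z(w)$ and $w\mapsto k_z^{(j)}(w):=w_j\,\mathcal K_{a,b}'(\langle w,z\rangle)=\partial_{\overline{z}_j}k_z(w)$ are bounded and holomorphic on $\mathbb B_n$, hence lie in $\mathcal A^2_{a,b}(\mathbb B_n)$; as these bounds are locally uniform in $z$ and $f\in L^2(\mathbb B_n,d\mu_{a,b})\subset L^1(\mathbb B_n,d\mu_{a,b})$, one may differentiate $g$ under the integral sign. Using that $k_z(\cdot)$ is anti‑holomorphic in $z$ one gets $\partial_{z_j}K=\langle k_z,k_z^{(j)}\rangle_{a,b}$ and $\partial_{z_j}\!\int_{\mathbb B_n}f\,k_z\,\overline{k_z}\,d\mu_{a,b}=\langle f k_z,k_z^{(j)}\rangle_{a,b}$, whence
\[
\partial_{z_j}g(z)=\frac{1}{\|k_z\|_{a,b,2}^2}\,\big\langle (f-g(z))\,k_z,\ k_z^{(j)}\big\rangle_{a,b}.
\]

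Since $\langle (f-g(z))\,k_z,\,k_z\rangle_{a,b}=\langle f k_z,k_z\rangle_{a,b}-g(z)\|k_z\|_{a,b,2}^2=0$, in the identity $\sum_j\xi_j\,\partial_{z_j}g(z)=\|k_z\|_{a,b,2}^{-2}\,\langle (f-g(z))\,k_z,\,\Psi_{z,\xi}\rangle_{a,b}$ with $\Psi_{z,\xi}:=\sum_j\overline{\xi}_j\,k_z^{(j)}$ one may replace $\Psi_{z,\xi}$ by its component $\widetilde\Psi_{z,\xi}:=\Psi_{z,\xi}-\|k_z\|_{a,b,2}^{-2}\langle\Psi_{z,\xi},k_z\rangle_{a,b}\,k_z$ orthogonal to $k_z$; Cauchy--Schwarz then gives $\big|\sum_j\xi_j\partial_{z_j}g(z)\big|\le\|k_z\|_{a,b,2}^{-2}\,\|(f-g(z))k_z\|_{a,b,2}\,\|\widetilde\Psi_{z,\xi}\|_{a,b,2}$. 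Here $\|(f-g(z))k_z\|_{a,b,2}/\|k_z\|_{a,b,2}=\big(\mathbf B_{a,b}(|f-g(z)|^2)(z)\big)^{1/2}=MO(f)(z)$ by the very definition of the mean oscillation, while differentiating $\psi_{a,b}=\log\|k_z\|_{a,b,2}^2$ twice (again using $\partial_{\overline{z}_k}k_z=k_z^{(k)}$) gives, for the entries $A_{jk}(z)=\tfrac{\partial^2\psi_{a,b}}{\partial z_j\partial\overline{z}_k}(z)$ of $A(z)$, the Gram‑type formula
\[
A_{jk}(z)=\frac{1}{\|k_z\|_{a,b,2}^2}\Big(\langle k_z^{(k)},k_z^{(j)}\rangle_{a,b}-\frac{\langle k_z^{(k)},k_z\rangle_{a,b}\,\langle k_z,k_z^{(j)}\rangle_{a,b}}{\|k_z\|_{a,b,2}^2}\Big);
\]
contracting with $\xi_j\overline{\xi}_k$ and applying the Pythagorean theorem yields exactly $\langle A(z)\xi,\xi\rangle=\|\widetilde\Psi_{z,\xi}\|_{a,b,2}^2/\|k_z\|_{a,b,2}^2$, which finishes the proof of the pointwise estimate.

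Finally, for a piecewise smooth curve $\gamma:[0,1]\to\mathbb B_n$ with $\gamma(0)=z$ and $\gamma(1)=w$, the reality of $g$ gives $\tfrac{d}{dt}g(\gamma(t))=2\,\mathrm{Re}\sum_j\gamma_j'(t)\,\partial_{z_j}g(\gamma(t))$, so by the pointwise estimate $\big|\tfrac{d}{dt}g(\gamma(t))\big|\le 2\|f\|_{BMO}\,\langle A(\gamma(t))\gamma'(t),\gamma'(t)\rangle^{1/2}$; integrating over $[0,1]$ gives $|g(z)-g(w)|\le 2\|f\|_{BMO}\,\ell_{a,b}(\gamma)$, and taking the infimum over all such $\gamma$ gives $|g(z)-g(w)|\le 2\|f\|_{BMO}\,\mathbf d_{a,b}(z,w)$; combined with the first paragraph this is the theorem. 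I expect the main obstacle to be the identity $\langle A(z)\xi,\xi\rangle=\|\widetilde\Psi_{z,\xi}\|_{a,b,2}^2/\|k_z\|_{a,b,2}^2$, i.e.\ recognizing that the Bergman--Poincar\'e form $A(z)=\mathrm{Hess}_{\mathbb C}\,\log\mathcal K_{a,b}(|z|^2)$ is precisely the reproducing‑kernel metric of $\mathcal A^2_{a,b}(\mathbb B_n)$, so that the gradient of the Berezin transform and the metric are both controlled by the same Gram matrix of the kernel derivatives $k_z^{(j)}$; the remaining ingredients (justifying the differentiations, the reduction to real $f$, and the curve integration) are routine.
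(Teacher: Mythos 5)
Your proof is correct, and its overall architecture coincides with the paper's: express the derivative of $\mathbf B_{a,b}f$ along a curve as a pairing of $(f-\mathbf B_{a,b}f(z))\kappa_z$ against a derivative of the kernel, exploit the orthogonality $\langle (f-\mathbf B_{a,b}f(z))\kappa_z,\kappa_z\rangle_{a,b}=0$ to project that derivative onto the orthocomplement of $\kappa_z$, apply Cauchy--Schwarz to produce the factor $MO(f)(z)$, and then integrate along curves and take an infimum. Where you genuinely diverge is in the proof of the crucial identity equating $\langle A(z)\xi,\xi\rangle$ with $\|\widetilde\Psi_{z,\xi}\|_{a,b,2}^2/\|k_z\|_{a,b,2}^2$ (the paper's Proposition 4.2, stated there as $\|(I-\mathscr P_{\gamma(t)})(\tfrac{d}{dt}\kappa_{\gamma(t)})\|^2_{2,a,b}=\langle A(\gamma(t))\gamma'(t),\gamma'(t)\rangle$). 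The paper obtains it by a hands-on computation specific to the hypergeometric kernel: expanding $\mathcal K'_{a,b}$ in a power series, integrating in polar coordinates over $\mathbb S^n$ with the identity \eqref{eq1}, and resumming to recognize $\mathcal K''_{a,b}$ and $\mathcal K'_{a,b}$. You instead differentiate $\psi_{a,b}=\log\|k_z\|^2_{a,b,2}$ twice using only the anti-holomorphy of $z\mapsto k_z$ and read off the Gram-type formula for $A_{jk}(z)$, so that the identity becomes the Pythagorean theorem for the projection of $\Psi_{z,\xi}$ along $k_z$. Your route is shorter, requires no series manipulations, and works verbatim for any reproducing kernel smooth in $z$; the paper's computation is longer but produces the explicit closed form \eqref{4.4} for $\|H_t\|^2_{a,b,2}$ in terms of $\mathcal K''_{a,b}$ and $\mathcal K'_{a,b}$. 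Your preliminary reduction to real-valued $f$ (via $h=\mathrm{Re}(e^{i\theta}f)$ and the double-integral formula for $MO$) is sound and is an extra step the paper does not need, since it bounds $|\tfrac{d}{dt}\mathbf B_{a,b}f(\gamma(t))|$ directly without assuming $\mathbf B_{a,b}f$ real; but it costs nothing and correctly justifies your chain-rule identity $\tfrac{d}{dt}g(\gamma(t))=2\,\mathrm{Re}\sum_j\gamma_j'(t)\partial_{z_j}g(\gamma(t))$.
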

The crucial step of the proof is the following proposition. In what follows, for every $z\in \mathbb{B}_{n}$ we consider the orthogonal projection  $\mathscr P_{z}$  from $\mathcal{A}^{2}_{a,b}(\mathbb{B}_{n},d\mu_{a,b})$ onto the one-dimensional subspace spanned by the normalized reproducing kernel $\kappa_{z}$. Therefore \begin{equation*}
	\mathscr P_{z}f(\zeta)=\langle f,\kappa_{z} \rangle \kappa_{z}(\zeta)=\frac{\mathcal{K}_{a,b}(\langle \zeta,z\rangle)}{\mathcal{K}_{a,b}(|z|^{2})} f(z).
\end{equation*}
\begin{prop}\label{p4.2}
	Let $\gamma(t)$ be a smooth curve in $\mathbb{B}_{n}$. Then \begin{equation*}
		\left\| (I-\mathscr P_{\gamma(t)}) \left( \frac{d}{dt}  \kappa_{\gamma(t)} \right) \right\|^2_{2,a,b}=\langle A(\gamma(t))\gamma'(t), \gamma'(t)\rangle.
	\end{equation*}
\end{prop}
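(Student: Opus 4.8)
The plan is to compute both sides of the identity explicitly in terms of the function $\mathcal{K}_{a,b}$ and its derivatives evaluated at $|\gamma(t)|^2$, and check they coincide. Write $z=\gamma(t)$ and $z'=\gamma'(t)$ for brevity. First I would expand the left-hand side using the Pythagorean identity for the orthogonal projection: since $\mathscr P_{z}$ is the orthogonal projection onto $\mathbb{C}\kappa_{z}$ and $\|\kappa_z\|_{2,a,b}=1$, one has
\begin{equation*}
	\left\| (I-\mathscr P_{z})\tfrac{d}{dt}\kappa_{z}\right\|^2_{2,a,b}=\left\|\tfrac{d}{dt}\kappa_{z}\right\|^2_{2,a,b}-\left|\big\langle \tfrac{d}{dt}\kappa_{z},\kappa_{z}\big\rangle_{a,b}\right|^2 .
\end{equation*}
So the task reduces to computing the two inner products $\big\langle \tfrac{d}{dt}\kappa_{z},\tfrac{d}{dt}\kappa_{z}\big\rangle_{a,b}$ and $\big\langle \tfrac{d}{dt}\kappa_{z},\kappa_{z}\big\rangle_{a,b}$.

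Next I would carry out the differentiation. Since $\kappa_z(\zeta)=\mathcal{K}_{a,b}(\langle\zeta,z\rangle)\,\mathcal{K}_{a,b}(|z|^2)^{-1/2}$, the chain rule gives $\tfrac{d}{dt}\kappa_{z}(\zeta)$ as a combination of $\mathcal{K}'_{a,b}(\langle\zeta,z\rangle)$, $\mathcal{K}_{a,b}(\langle\zeta,z\rangle)$ with coefficients involving $\langle\zeta,z'\rangle$, $|z|^2$, $\langle z,z'\rangle$ and the derivatives $\mathcal{K}_{a,b}$, $\mathcal{K}'_{a,b}$ evaluated at $|z|^2$. The key computational input is the reproducing property in its differentiated forms: for fixed $z$, the functions $\zeta\mapsto \mathcal{K}_{a,b}(\langle\zeta,z\rangle)$ and $\zeta\mapsto \sum_k \mathcal{K}_{a,b}^{(k)}$-type expressions reproduce derivatives of holomorphic functions at $z$, and more usefully, for holomorphic $g,h$ one has identities of the shape
\begin{equation*}
	\int_{\mathbb{B}_{n}} \mathcal{K}_{a,b}(\langle w,z\rangle)\,\overline{\mathcal{K}_{a,b}(\langle w,z\rangle)}\,d\mu_{a,b}(w)=\mathcal{K}_{a,b}(|z|^2),
\end{equation*}
together with its $\partial_{z_j}$ and $\partial_{z_j}\partial_{\overline z_k}$ derivatives, which produce $\mathcal{K}'_{a,b}(|z|^2)$ and $\mathcal{K}''_{a,b}(|z|^2)$ terms. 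Differentiating $\|\mathbb{K}_{a,b}(\cdot,z)\|^2_{a,b,2}=\mathcal{K}_{a,b}(|z|^2)$ in $z$ and $\overline z$ is the cleanest way to generate exactly the moment integrals needed; this avoids re-deriving anything from the Hilbert basis.

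After assembling these pieces, both inner products become explicit rational expressions in $\mathcal{K}_{a,b}(|z|^2),\mathcal{K}'_{a,b}(|z|^2),\mathcal{K}''_{a,b}(|z|^2)$ multiplied by the scalars $|z'|^2$, $|\langle z,z'\rangle|^2$, and cross terms in $\langle z,z'\rangle\langle z',z\rangle$; subtracting $|\langle\tfrac{d}{dt}\kappa_z,\kappa_z\rangle|^2$ should cancel precisely the $(\mathcal{K}'_{a,b}/\mathcal{K}_{a,b})^2$-free part and leave
\begin{equation*}
	\left[\frac{\mathcal{K}''_{a,b}(|z|^2)}{\mathcal{K}_{a,b}(|z|^2)}-\left(\frac{\mathcal{K}'_{a,b}(|z|^2)}{\mathcal{K}_{a,b}(|z|^2)}\right)^2\right]|\langle z,z'\rangle|^2+\frac{\mathcal{K}'_{a,b}(|z|^2)}{\mathcal{K}_{a,b}(|z|^2)}|z'|^2,
\end{equation*}
which is exactly $\langle A(z)z',z'\rangle$ as given in \eqref{q4.1}. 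I expect the main obstacle to be bookkeeping: correctly tracking the holomorphic versus antiholomorphic dependence of $\kappa_z$ on $z$ when differentiating along the real parameter $t$ (so $\tfrac{d}{dt}$ hits both $z$ and $\overline z$), and making sure the moment integrals $\int_{\mathbb{B}_n} \langle w,z\rangle^k\langle z,w\rangle^j d\mu_{a,b}(w)$ — which are diagonal in $k,j$ with the coefficients that produce $\mathcal{K}_{a,b}$ and its derivatives — are matched to the right power of $\mathcal{K}_{a,b}$-derivative. Once the differentiation of the normalization $\mathcal{K}_{a,b}(|z|^2)^{-1/2}$ is handled carefully, the algebraic cancellation is forced and the identity follows.
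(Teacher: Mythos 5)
Your plan is correct, and the overall skeleton coincides with the paper's: both reduce the left-hand side to $\bigl\|\tfrac{d}{dt}\kappa_{\gamma(t)}\bigr\|^2_{2,a,b}-\bigl\|\mathscr P_{\gamma(t)}\bigl(\tfrac{d}{dt}\kappa_{\gamma(t)}\bigr)\bigr\|^2_{2,a,b}$ (the paper writes this as $\|H_t\|^2-\|G_t\|^2$ after checking $\langle H_t,G_t\rangle=\|G_t\|^2$; your Pythagorean identity is the same reduction) and then identify the result with $\langle A(\gamma(t))\gamma'(t),\gamma'(t)\rangle$ via \eqref{q4.1}. Where you genuinely diverge is in the evaluation of the key moment integral $\int_{\mathbb B_n}|\langle \zeta,\gamma'(t)\rangle|^2\,|\mathcal K'_{a,b}(\langle\zeta,\gamma(t)\rangle)|^2\,d\mu_{a,b}(\zeta)$: the paper rotates to an orthonormal frame adapted to $\gamma(t)$ and $\gamma'(t)$, passes to polar coordinates, invokes \eqref{eq1}, and resums the hypergeometric series term by term, whereas you obtain the same quantity by applying $\partial_{z_j}\partial_{\overline z_k}$ to the reproducing identity $\int_{\mathbb B_n}|\mathcal K_{a,b}(\langle w,z\rangle)|^2\,d\mu_{a,b}(w)=\mathcal K_{a,b}(|z|^2)$, which yields $\int_{\mathbb B_n} w_k\overline w_j\,|\mathcal K'_{a,b}(\langle w,z\rangle)|^2\,d\mu_{a,b}(w)=z_k\overline z_j\,\mathcal K''_{a,b}(|z|^2)+\delta_{jk}\mathcal K'_{a,b}(|z|^2)$ directly, and the first-order derivative gives the cross terms. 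This is cleaner and avoids the series bookkeeping entirely; its only cost is that you must justify differentiation under the integral sign (routine, by locally uniform convergence of the power series of $\mathcal K_{a,b}$ and its derivatives on compacta of $\mathbb B_n$). I verified that your route closes: with $c=\Re\langle\gamma',\gamma\rangle\,\mathcal K'_{a,b}/\mathcal K_{a,b}$ one gets $\|\tfrac{d}{dt}\kappa\|^2=\|H_t\|^2-(\mathcal K'_{a,b}/\mathcal K_{a,b})^2(\Re\langle\gamma,\gamma'\rangle)^2$ and $|\langle\tfrac{d}{dt}\kappa,\kappa\rangle|^2=(\mathcal K'_{a,b}/\mathcal K_{a,b})^2(\Im\langle\gamma,\gamma'\rangle)^2$, whose difference is exactly $\|H_t\|^2-(\mathcal K'_{a,b}/\mathcal K_{a,b})^2|\langle\gamma,\gamma'\rangle|^2$, matching \eqref{q4.1}. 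To turn the plan into a proof you only need to write out these two or three displayed identities; no step of the plan fails.
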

\begin{proof}
	Since we have \begin{equation*}
		\kappa_{z}(\zeta)=\frac{\mathcal{K}_{a,b}(\langle \zeta,z\rangle)}{\sqrt{\mathcal{K}_{a,b}(|z|^{2})}}
	\end{equation*}
	then \begin{equation}\label{eq3}
		\frac{d}{dt}  \kappa_{\gamma(t)}(\zeta) = \frac{\langle \zeta,\gamma'(t) \rangle \mathcal{K'}_{a,b}(\langle \zeta,\gamma(t) \rangle)}{\mathcal{K}^{\frac{1}{2}}_{a,b}(|\gamma(t)|^{2})}- \frac{\Re\langle \gamma'(t),\gamma(t)\rangle \mathcal{K'}_{a,b}(|\gamma(t)|^{2})}{\mathcal{K}^{\frac{3}{2}}_{a,b}(|\gamma(t)|^{2})} \mathcal{K}_{a,b}(\langle \zeta,\gamma(t) \rangle).
	\end{equation}
	It follows that
    \begin{align} \label{eq4}
		\mathscr P_{\gamma(t)} \bigg ( \frac{d}{dt}  \kappa_{\gamma(t)} \bigg)(\zeta)&=\frac{ \mathcal{K}_{a,b}(\langle \zeta,\gamma(t) \rangle)}{\sqrt{\mathcal{K}_{a,b}(|\gamma(t)|^{2})}} \frac{d}{dt}  \kappa_{\gamma(t)}(\gamma(t))\\ \nonumber
		\ & = \frac{\mathcal{K}_{a,b}(\langle \zeta,\gamma(t) \rangle) \mathcal{K'}_{a,b}(|\gamma(t)|^{2})}{\mathcal{K}^{\frac{3}{2}}_{a,b}(|\gamma(t)|^{2})} \bigg(\langle \gamma(t),\gamma'(t) \rangle - \Re \langle \gamma'(t), \gamma(t) \rangle\bigg).
    \end{align}
	Using \eqref{eq3} and \eqref{eq4}, we obtain \begin{align*}
		(I-\mathscr P_{\gamma(t)}) \bigg ( \frac{d}{dt}  \kappa_{\gamma(t)} \bigg) & = \frac{\langle \zeta,\gamma'(t) \rangle) \mathcal{K'}_{a,b}(\langle \zeta,\gamma(t) \rangle)}{\mathcal{K}^{\frac{1}{2}}_{a,b}(|\gamma(t)|^{2})} - \frac{\mathcal{K}_{a,b}(\langle \zeta,\gamma(t) \rangle)\mathcal{K'}_{a,b}(|\gamma(t)|^{2}) }{\mathcal{K}^{\frac{3}{2}}_{a,b}(|\gamma(t)|^{2})} \\ \nonumber
		\ & =: H_{t}(\zeta)- G_{t}(\zeta).
    \end{align*}
    Thus \begin{equation*}	
		\left\| (I-\mathscr P_{\gamma(t)}) \left( \frac{d}{dt}  \kappa_{\gamma(t)} \right) \right\|^{2}_{2,a,b}= \| H_{t}\|_{a,b,2}^{2}+\|G_{t}\|^{2}_{a,b,2}-2 \Re \langle H_{t}, G_{t} \rangle_{a,b}.
	\end{equation*}
	It's easy to see that \begin{equation}\label{4.3}
		\|G_{t}\|^{2}_{a,b,2}= \langle H_{t}, G_{t} \rangle_{a,b}= \bigg (\frac{\mathcal{K'}_{a,b}(|\gamma(t)|^{2})}{\mathcal{K}_{a,b}(|\gamma(t)|^{2})} \bigg)^{2} |\langle \gamma(t), \gamma'(t) \rangle |^{2},
	\end{equation}
	where we use the fact that $\zeta \mapsto \langle \zeta, \gamma'(t) \rangle \mathcal{K'}_{a,b}(\langle \zeta,\gamma(t) \rangle)$ is in $\mathcal{A}_{a,b}^{2}(\mathbb{B}_{n}, d\mu_{a,b})$. Hence \begin{equation*}
			\left\| (I-\mathscr P_{\gamma(t)}) \left( \frac{d}{dt}  \kappa_{\gamma(t)} \right) \right\|^{2}_{2,a,b}= \| H_{t}\|_{a,b,2}^{2}-\|G_{t}\|^{2}_{a,b,2}.
	\end{equation*}
	Using both the polar coordinates and orthogonality, we can obtain the result below
	\begin{align*}
			\|H_{t}\|^{2}_{a,b,2}&=   \frac{1}{\mathcal{K}_{a,b}(|\gamma(t)|^{2})}\int_{\mathbb{B}_{n}} | \langle \zeta , \gamma'(t) \rangle |^{2} | \mathcal{K'}_{a,b}(\langle \zeta,\gamma(t) \rangle) |^{2} d\mu_{a,b}(\zeta) \\ \nonumber
			\ &= \frac{C(a,b,n)}{\mathcal{K}_{a,b}(|\gamma(t)|^{2})} \int_{0}^{1}  r^{2+2b+2n-1}(1-r^{2})^{a}  \int_{\mathbb{S}^{n}} | \langle \xi , \gamma'(t) \rangle |^{2} |\mathcal{K'}_{a,b}(r \langle \xi,\gamma(t) \rangle)|^{2}dr d\sigma_{n}(\xi)
	\end{align*}
where $C(a,b,n)= \frac{(n-1)!}{\pi^{n}\beta(a+1,b+n)}$ as used before. \\
	Let \begin{equation*}
		I(t)= \int_{\mathbb{S}^{n}} | \langle \xi , \gamma'(t) \rangle |^{2} |\mathcal{K'}_{a,b}(r \langle \xi,\gamma(t) \rangle)|^{2} d\sigma_{n}(\xi)
	\end{equation*}
	and Set
 \begin{equation*}
		v_{1}(t)=\frac{\gamma(t)}{|\gamma(t)|}, \qquad v_{2}(t)=\frac{\phi(t)}{| \phi(t)|}
	\end{equation*}
where $\phi(t)=\gamma'(t)-\langle \gamma'(t),v_{1}(t) \rangle v_{1}(t)$ and completing $v_3, \dots,v_n$ to  obtain an orthonormal basis  of $\mathbb{C}^{n}$. In this basis, we have $u_{1}=\langle \xi , v_{1}(t) \rangle$ is the first coordinate and  $u_{2}=\langle \xi , v_{2}(t) \rangle$ is the second one, hence we obtain \begin{equation*}
	\langle \xi , \gamma(t) \rangle= |\gamma(t)| \langle \xi , v_{1}(t)\rangle=|\gamma(t)|u_{1}  \qquad\text{and} \qquad \langle \xi , \gamma'(t)\rangle= \langle v_{1}(t),\gamma'(t) \rangle u_{1}+|\phi(t)|u_{2}.
\end{equation*}
By a change of variables, we find \begin{equation*}
		I(t)=\int_{\mathbb{S}^{n}} |  \langle v_{1}(t),\gamma'(t) \rangle u_{1}+|\phi(t)|u_{2}  |^{2} |\mathcal{K'}_{a,b}(r|\gamma(t)|u_{1})|^{2} d\sigma_{n}(u).
\end{equation*}
It follows that \begin{align*}
I(t)&=  \sum_{k,j=0}^{+\infty} \bigg (\frac{n(a+b+n+1)}{b+n} \bigg)^{2} \frac{(n+1)_{k}(n+1)_{j}(a+b+n+2)_{k}(a+b+n+2)_{j}}{(b+n+1)_{k}(b+n+1)_{j}} r^{k+j} |\gamma(t)|^{k+j} \\ \nonumber
	\ & \times \int_{\mathbb{S}^{n}} \bigg ( |\langle  v_{1}(t),\gamma'(t) \rangle |^{2} |u_{1}|^{2}+|\phi(t)|^{2}|u_{2}|^{2}- 2|\phi(t)|\Re(\langle v_{1},\gamma'(t)\rangle u_{1}\overline{u}_2)\bigg) u_{1}^{k} \overline{u_{1}}^{j} d\sigma_{n}(u).
\end{align*}
Using now the orthogonality and \eqref{eq1}  , we can deduce that \begin{align*}
		\|H_{t}\|^{2}_{a,b,2}&= \frac{(n-1)!}{\pi^{n}\beta(a+1,b+n)} 2\pi^{n} \bigg (\frac{n(a+b+n+1)}{b+n} \bigg)^{2} \frac{1}{\mathcal{K}_{a,b}(|\gamma(t)|^{2})} \sum_{k=0}^{+\infty} \frac{(n+1)_{k}^{2}(a+b+n+2)_{k}^{2}}{(b+n+1)_{k}^{2} k!} |\gamma(t)|^{2k} \\ \nonumber
		 \ & \times\bigg [ \frac{k}{|\gamma(t)|^{2}}|\langle \gamma'(t),\gamma(t)\rangle|^{2}+|\gamma'(t)|^{2}\bigg] \int_{0}^{1} r^{2k+2b+2n+1}(1-r^{2})^{a} dr \\ \nonumber
		 \ & = \frac{n(a+b+n+1)}{\mathcal{K}_{a,b}(|\gamma(t)|^{2}) (b+n)} \sum_{k=0}^{+\infty} \frac{(n+1)_{k}(a+b+n+2)_{k}}{(b+n+1)_{k} k!} |\gamma(t)|^{2k} \bigg [ \frac{k}{|\gamma(t)|^{2}}|\langle \gamma'(t),\gamma(t)\rangle|^{2}+|\gamma'(t)|^{2}\bigg].
\end{align*}
This is exactly \begin{equation}\label{4.4}
	\|H_{t}\|^{2}_{a,b,2}= \frac{\mathcal{K''}_{a,b}(|\gamma(t)|^{2})}{\mathcal{K}_{a,b}(|\gamma(t)|^{2})} |\langle \gamma'(t), \gamma(t) \rangle|^{2} + \frac{\mathcal{K'}_{a,b}(|\gamma(t)|^{2})}{\mathcal{K}_{a,b}(|\gamma(t)|^{2})}|\gamma'(t)|^{2}
\end{equation}
Using \eqref{4.3}, \eqref{4.4} and \eqref{q4.1}, we conclude that $$\begin{array}{l}
		\ds\left\| (I-\mathscr P_{\gamma(t)}) \left( \frac{d}{dt}  \kappa_{\gamma(t)} \right) \right\|^{2}_{2,a,b} = \ds \| H_{t}\|_{a,b,2}^{2}-\|G_{t}\|^{2}_{a,b,2} \\
		=\ds \left[\frac{\mathcal{K''}_{a,b}(|\gamma(t)|^{2})}{\mathcal{K}_{a,b}(|\gamma(t)|^{2})} -\left (\frac{\mathcal{K'}_{a,b}(|\gamma(t)|^{2})}{\mathcal{K}_{a,b}(|\gamma(t)|^{2})} \right)^{2} \right]|\langle \gamma(t), \gamma'(t) \rangle |^{2} + \frac{\mathcal{K'}_{a,b}(|\gamma(t)|^{2})}{\mathcal{K}_{a,b}(|\gamma(t)|^{2})} |\gamma'(t)|^{2}  \\
		 =\ds \langle A(\gamma(t))\gamma'(t), \gamma'(t)\rangle.
\end{array}$$
\end{proof}
\begin{lem}\label{lem4.3}
	Let $f \in BMO$ and  $\gamma:[0,1]\longrightarrow \mathbb B_n$ be a smooth curve. Then \begin{equation}
			\left| \frac{d}{dt}\mathbf B_{a,b}f(\gamma(t))  \right| \leq  2 	MO(f)(\gamma(t))  \langle A(\gamma(t))\gamma'(t), \gamma'(t)\rangle^{\frac{1}{2}}.
	\end{equation}
	\begin{proof}
		Differentiation under the integral sign gives
		\begin{equation*}
			\frac{d}{dt}\mathbf B_{a,b}f(\gamma(t)) = 2 \int_{\mathbb{B}_{n}} f(w) \Re \left[ \frac{d}{dt}  \kappa_{\gamma(t)} (w)  \overline{\kappa_{\gamma(t)}(w)} \right] d\mu_{a,b}(w).
		\end{equation*}
	After making a few simple modifications, we can use the same argument as in the proof of Theorem 2.22 in \cite{He-Ko-Zh}, to deduce that
		\begin{align*}
			\left | \frac{d}{dt}\mathbf B_{a,b}f(\gamma(t))  \right| & \leq  2 \int_{\mathbb{B}_{n}} \left | f(w) - \mathbf{B}_{a,b}f(\gamma(t)) \right| | \kappa_{\gamma(t)}(w)| \left |  (I-\mathscr P_{\gamma(t)}) \left( \frac{d}{dt}  \kappa_{\gamma(t)} \right)(w) \right|  d\mu_{a,b}(w) \\ \nonumber
			\ & \leq 2 \| f-\mathbf B_{a,b}f(\gamma(t))\kappa_{\gamma(t)}  \|_{2,a,b} \left\| (I-\mathscr P_{\gamma(t)}) \left( \frac{d}{dt}  \kappa_{\gamma(t)} \right) \right\|_{2,a,b} \\ \nonumber
			\ & \leq 2 	MO(f)(\gamma(t))  \langle A(\gamma(t))\gamma'(t), \gamma'(t)\rangle^{\frac{1}{2}}
		\end{align*}
	where we use the fact that \begin{equation*}
		\| f-\mathbf{B}_{\alpha,\beta}f(\gamma(t))\kappa_{\gamma(t)}  \|_{2,a,b}=MO(f)(\gamma(t)).
	\end{equation*}
	\end{proof}
\end{lem}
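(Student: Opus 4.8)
The plan is to differentiate the Berezin transform along the curve and then exploit the structure of the one–dimensional orthogonal projection $\mathscr P_{\gamma(t)}$ to insert two ``free'' cancellations (the tangential part of $\frac{d}{dt}\kappa_{\gamma(t)}$ and a subtracted constant), which convert the raw derivative into an expression handled cleanly by Cauchy--Schwarz and by Proposition \ref{p4.2}.

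First I would write $\mathbf{B}_{a,b}f(\gamma(t))=\int_{\mathbb{B}_n} f(w)\,|\kappa_{\gamma(t)}(w)|^{2}\,d\mu_{a,b}(w)$ and differentiate under the integral sign; since $\gamma([0,1])$ is a compact subset of $\mathbb{B}_n$, both $\kappa_{\gamma(t)}$ and its $t$--derivative (call it $\dot\kappa_{\gamma(t)}:=\frac{d}{dt}\kappa_{\gamma(t)}$, given by \eqref{eq3}) are uniformly bounded on $\mathbb{B}_n$ there and $f\in L^{1}$, so the differentiation is legitimate. Using $\frac{d}{dt}|\kappa_{\gamma(t)}(w)|^{2}=2\Re\!\left[\dot\kappa_{\gamma(t)}(w)\overline{\kappa_{\gamma(t)}(w)}\right]$ this yields $\frac{d}{dt}\mathbf{B}_{a,b}f(\gamma(t))=2\int_{\mathbb{B}_n} f(w)\,\Re\!\left[\dot\kappa_{\gamma(t)}(w)\overline{\kappa_{\gamma(t)}(w)}\right]d\mu_{a,b}(w)$, which holds for $f$ complex as well since the factor being differentiated is the real quantity $|\kappa_{\gamma(t)}|^{2}$.

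The heart of the argument is an orthogonality reduction. Decompose $\dot\kappa_{\gamma(t)}=\mathscr P_{\gamma(t)}\dot\kappa_{\gamma(t)}+(I-\mathscr P_{\gamma(t)})\dot\kappa_{\gamma(t)}$ with $\mathscr P_{\gamma(t)}\dot\kappa_{\gamma(t)}=\langle \dot\kappa_{\gamma(t)},\kappa_{\gamma(t)}\rangle_{a,b}\,\kappa_{\gamma(t)}$. Differentiating the identity $\|\kappa_{\gamma(t)}\|^{2}_{2,a,b}\equiv 1$ gives $\Re\langle\dot\kappa_{\gamma(t)},\kappa_{\gamma(t)}\rangle_{a,b}=0$, whence $\Re\!\left[\mathscr P_{\gamma(t)}\dot\kappa_{\gamma(t)}(w)\,\overline{\kappa_{\gamma(t)}(w)}\right]=|\kappa_{\gamma(t)}(w)|^{2}\,\Re\langle\dot\kappa_{\gamma(t)},\kappa_{\gamma(t)}\rangle_{a,b}=0$ pointwise. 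Therefore $\dot\kappa_{\gamma(t)}$ may be replaced by $(I-\mathscr P_{\gamma(t)})\dot\kappa_{\gamma(t)}$ in the integral. Moreover, for any constant $c$ one has $\int_{\mathbb{B}_n} c\,\Re\!\left[(I-\mathscr P_{\gamma(t)})\dot\kappa_{\gamma(t)}\cdot\overline{\kappa_{\gamma(t)}}\right]d\mu_{a,b}=c\,\Re\langle (I-\mathscr P_{\gamma(t)})\dot\kappa_{\gamma(t)},\kappa_{\gamma(t)}\rangle_{a,b}=0$, the inner product vanishing because the range of $I-\mathscr P_{\gamma(t)}$ is orthogonal to $\kappa_{\gamma(t)}$. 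Taking $c=\mathbf{B}_{a,b}f(\gamma(t))$, I obtain $\frac{d}{dt}\mathbf{B}_{a,b}f(\gamma(t))=2\int_{\mathbb{B}_n}\bigl[f(w)-\mathbf{B}_{a,b}f(\gamma(t))\bigr]\,\Re\!\left[(I-\mathscr P_{\gamma(t)})\dot\kappa_{\gamma(t)}(w)\,\overline{\kappa_{\gamma(t)}(w)}\right]d\mu_{a,b}(w)$.

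Finally I would estimate. Bounding $|\Re[\cdot]|\le|\cdot|$ and applying Cauchy--Schwarz in $L^{2}(\mathbb{B}_n,d\mu_{a,b})$ gives $\left|\frac{d}{dt}\mathbf{B}_{a,b}f(\gamma(t))\right|\le 2\,\bigl\|(f-\mathbf{B}_{a,b}f(\gamma(t)))\kappa_{\gamma(t)}\bigr\|_{2,a,b}\,\bigl\|(I-\mathscr P_{\gamma(t)})\dot\kappa_{\gamma(t)}\bigr\|_{2,a,b}$. The second factor equals $\langle A(\gamma(t))\gamma'(t),\gamma'(t)\rangle^{1/2}$ by Proposition \ref{p4.2}. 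For the first, expanding $|f-c|^{2}$ and using $\int|\kappa_z|^{2}d\mu_{a,b}=1$, $\int f|\kappa_z|^{2}d\mu_{a,b}=\mathbf{B}_{a,b}f(z)$, and $\int|f|^{2}|\kappa_z|^{2}d\mu_{a,b}=\mathbf{B}_{a,b}(|f|^{2})(z)$ shows $\bigl\|(f-\mathbf{B}_{a,b}f(z))\kappa_z\bigr\|_{2,a,b}^{2}=\mathbf{B}_{a,b}(|f|^{2})(z)-|\mathbf{B}_{a,b}f(z)|^{2}=MO(f)(z)^{2}$. Combining the two identifications at $z=\gamma(t)$ yields the claimed inequality. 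I expect the main obstacle to be the bookkeeping in the orthogonality reduction—verifying that both the tangential component of $\dot\kappa_{\gamma(t)}$ and the subtraction of the constant $\mathbf{B}_{a,b}f(\gamma(t))$ contribute zero after taking real parts—rather than the concluding Cauchy--Schwarz step, which is routine once those cancellations are in place.
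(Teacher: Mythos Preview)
Your proof is correct and follows essentially the same approach as the paper's: differentiate under the integral sign, perform the orthogonality reduction to replace $\dot\kappa_{\gamma(t)}$ by $(I-\mathscr P_{\gamma(t)})\dot\kappa_{\gamma(t)}$ and subtract the constant $\mathbf{B}_{a,b}f(\gamma(t))$, then apply Cauchy--Schwarz together with Proposition~\ref{p4.2} and the identity $\|(f-\mathbf{B}_{a,b}f(z))\kappa_z\|_{2,a,b}=MO(f)(z)$. The only difference is that you have written out the two cancellations explicitly (via $\Re\langle\dot\kappa_{\gamma(t)},\kappa_{\gamma(t)}\rangle_{a,b}=0$ and $\langle(I-\mathscr P_{\gamma(t)})\dot\kappa_{\gamma(t)},\kappa_{\gamma(t)}\rangle_{a,b}=0$), whereas the paper simply cites the argument of Theorem~2.22 in \cite{He-Ko-Zh} for this step.
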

	Now we can deduce the proof of Theorem \ref{th2}:
	\begin{proof}
		Let $f\in BMO$ and $z,w\in\mathbb B_n$. Let $\gamma:[0,1]\longrightarrow \mathbb B_n$ be a piecewise smooth curve with $\gamma(0)=z$ and $\gamma(1)=w$. Without loss of generality, we can assume that $\gamma$ is smooth on $[0,1]$.  By Lemma \ref{lem4.3},
		$$\begin{array}{lcl}
			\ds |\mathbf{B}_{a,b}f(z)-\mathbf{B}_{a,b}f(w)|&=&\ds \left|\int_0^1 \frac{d}{dt}\mathbf{B}_{a,b}f(\gamma(t))dt\right|\\
        &\leq&\ds 2\int_0^1	MO(f)(\gamma(t))  \langle A(\gamma(t))\gamma'(t), \gamma'(t)\rangle^{\frac{1}{2}}dt \\
			&\leq&\ds 2\|f\|_{BMO}\int_0^1 \langle A(\gamma(t))\gamma'(t), \gamma'(t)\rangle^{\frac{1}{2}}dt\\
        &\leq&\ds2\|f\|_{BMO}\ell_{a,b}(\gamma).
		\end{array}$$
		To conclude the result, it suffices to take the infimum over all piecewise smooth curves $\gamma$.
	\end{proof}
 \section{Comments and concluding remarks}
 In this concluding section, we discuss the zero set $\mathcal Z_{Q_{a,b}}$ of $Q_{a,b}$ in $\mathbb D$. Indeed, the condition  $\mathcal Z_{Q_{a,b}}=\emptyset$ was used in many results. We claim that the function $Q_{a,b}$ is a particular case of hypergeometric functions studied in many papers (see for examples papers of Duren and his collaborators in \cite{Bo-Du, Dr-Du1, Dr-Du2, Dr-Du3, Dr-Mo, Du-Gu}). Until now there is no complete answer for these functions. To clarify the situation, we will restrict our work to the case $n=1$ to use a result due to Duren and Guillou \cite{Du-Gu} in the last situation when  $a$ is a large integer and $b$ is proportional to $a$.\\
 We inspect first the relationship between zeros of $Q_{a,b}$ and those of the modified Bergman kernel $\mathbb K_{a,b}$, then we look into the cases when $b$ is near 0 or $-n$ and finally we consider the case $b$ large enough.
 \begin{enumerate}
     \item Since $Q_{a,b}$ is holomorphic on $\mathbb D$ then  $\mathcal Z_{Q_{a,b}}$ is a discreet set.\\
 Moreover, since $$\mathbb K_{a,b}(z,w)=\mathcal K_{a,b}(\langle z,w\rangle)=\frac{Q_{a,b}(\langle z,w\rangle)}{(1-\langle z,w\rangle)^{a+n+1}}$$
         then the set of zeros of $\mathbb K_{a,b}$ is given by $$\bigcup_{\xi\in \mathcal Z_{Q_{a,b}}}\left\{(z,w)\in\mathbb B_n\times \mathbb B_n;\ \langle z,w\rangle=\xi\right\}=:\bigcup_{\xi\in \mathcal Z_{Q_{a,b}}}A_\xi$$
         a discreet union of sets $A_\xi$ where each $A_\xi$ is a submanifold  of $\mathbb R^{4n}$ with real dimension $4n-2$. Moreover, $A_\xi$ is a Cauchy-Riemann submanifold of $\mathbb R^{4n}$ with Cauchy-Riemann dimension $4n-4$ (see \cite{Bo} for more details about Cauchy-Riemann submanifolds).
   \item  We claim that the set $\mathcal Z_{Q_{a,b}}$ can be empty, finite, or infinite according to the values of $a$ and $b$. Indeed, using Rouch\'e theorem, one can easily show that there exist two real constants $-n<b_1(a)<b_2(a)<0$ depending on $a$ such that for every $b\in]-n,b_1(a)[$, the function $Q_{a,b}$ has at most $n$ (simple) zeros in  $\mathbb D$ and it has no zero in $\mathbb D$ for every $b\in ]b_2(a),-b_2(a)[$ (see Figure \ref{fig1}).
   \item We claim that if $b=-n+k$ where $k\in\mathbb N$ such that $k<n$ then $Q_{a,b}$ is a polynomial with degree at most $n-k$. Thus it has at most $n-k$ zeros in $\mathbb D$. By Taking for example $b=-1$, then we obtain $Q_{a,-1}(z)=1+\frac{a+1}{n-1}z$. It follows that the unique zero of $Q_{a,-1}$ in $\mathbb C$ is $-\frac{n-1}{a+1}$.

   \item Now for $b$ large enough, namely if  $n=1,\ a=m-1$ and $b=\tau m+1$ for some integer $m$ and a parameter $\tau>0$, then thanks to Duren and Guillou \cite{Du-Gu} (say Boggs and Duren \cite{Bo-Du} when $\tau\in\mathbb N$), the zeros (in $\mathbb C$) of the hypergeometric polynomial $Q_{m-1,\tau m+1}$ cluster on the loop of the lemniscate
   $$\left\{z\in\mathbb C;\ \left|z^\tau(z-1)\right|=\frac{\tau^\tau}{(1+\tau)^{1+\tau}}\right\}\quad \text{with}\quad \Re z>\frac{\tau}{\tau +1}$$
    when $m\to+\infty$. Hence the set $\mathcal Z_{Q_{a,b}}$ can be infinite in $\mathbb D$. (see figure \ref{fig2}).\\
It is interesting to study $\mathcal Z_{Q_{a,b}}$ for $b$ large and $n\geq2$. The well-known methods that can be used here are the quadratic differentials and their short trajectories or the saddle point method. We think that the second method is more suitable for this case. Each one of these methods requires many computations that cannot be developed in this paper. It may be the main subject of a new paper.
 \end{enumerate}

\begin{figure}
  \includegraphics[scale=0.5]{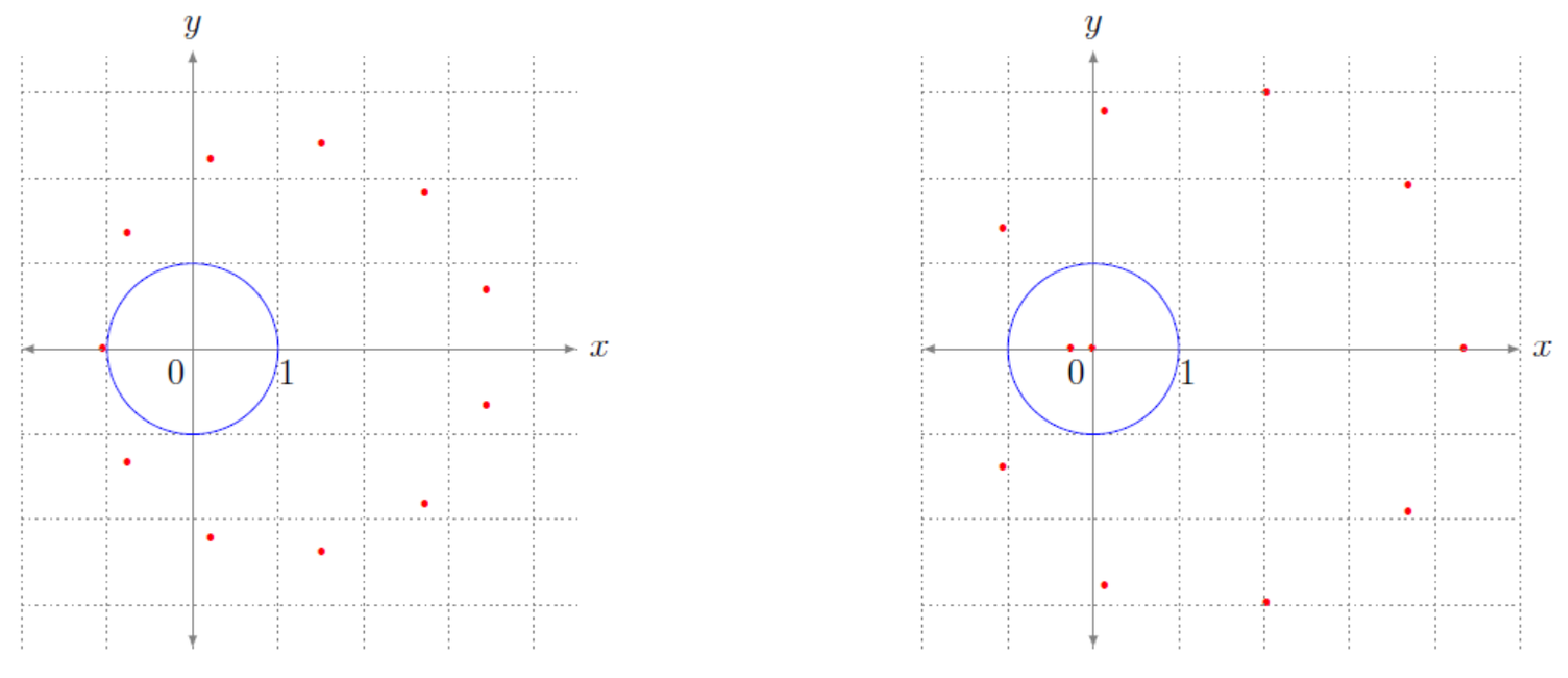}\\
  \caption{Zeros of $Q_{10,b}$ with $n=2$ and $b=-0.01$ at left and $b=-1.9$ at right.}\label{fig1}
\end{figure}

\begin{figure}
  \includegraphics[scale=0.5]{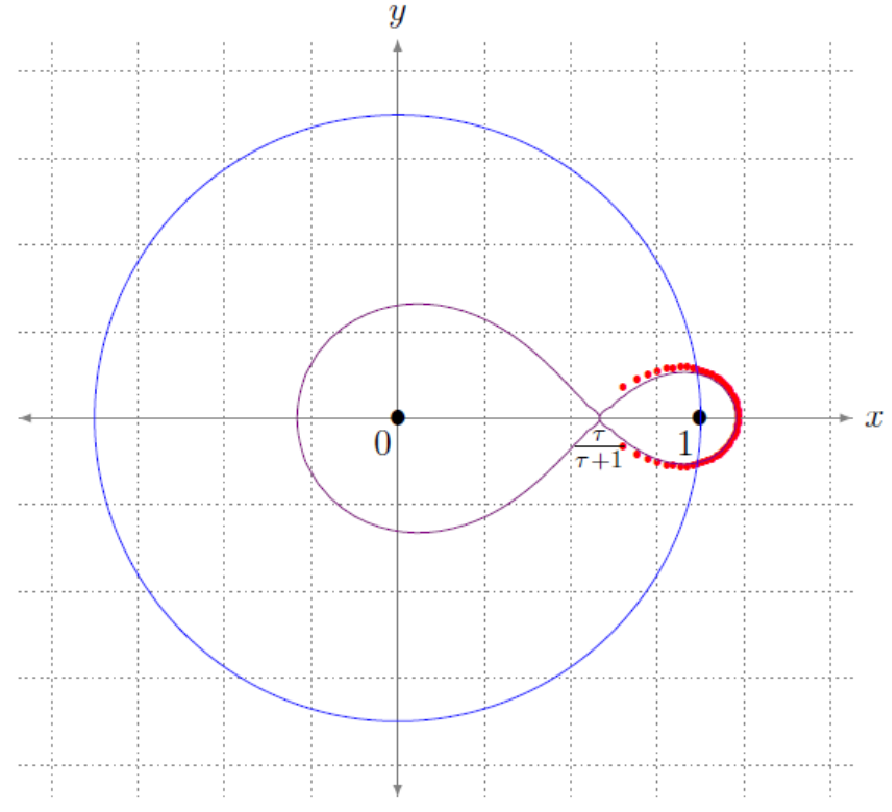}\\
  \caption{Zeros of the polynomial $Q_{m-1,\tau m+1}$ with $n=1,\ m=50$ and $\tau=2$.}
    \label{fig2}
\end{figure}

\newpage

\end{document}